\newtheorem{proposition}{Proposition}[section]
\newtheorem{theorem}[proposition]{Theorem}
\newtheorem{corollary}[proposition]{Corollary}
\newtheorem{lemma}[proposition]{Lemma}
\theoremstyle{definition}
\newtheorem{remark}[proposition]{Remark}
\numberwithin{equation}{section}
\def \no#1#2#3 {{\bf #1} (#3), #2.}
\def \eds#1#2#3 {#1, #2, #3.}
\title[Global stability of planes under SMCF ]
{ \bf
Global transversal stability of Euclidean planes under skew mean curvature flow evolutions }
\author [Z. Li ]
{Ze Li }
\address{Ze Li
\newline\indent
School of Mathematics and Statistics, Ningbo University
\newline\indent
Ningbo, 315211, P.R. China
}
\email{rikudosennin@163.com}
\keywords{skew mean curvature flow, motion of  membranes,   global stability }
\begin{document}

\begin{abstract}
In this paper, we prove that 2 dimensional transversal small perturbations of d-dimensional Euclidean planes under the skew mean curvature flow lead to global solutions which converge  to the unperturbed planes in suitable norms. And we clarify the long time behaviors of the solutions in Sobolev spaces.
\end{abstract}

 \maketitle

\section{Introduction}

Let  $\Sigma$  be a $d$-dimensional oriented manifold and $(\mathcal{N},h)$ be a  $(d+2)$-dimensional oriented Riemannian manifold. Assume that  ${\Bbb I}$
is an interval containing $t=0$, and $F :{\Bbb I}\times \Sigma \to \mathcal{N}$ is a family of immersions. For each given $t\in {\Bbb I}$,
denote $\Sigma_t= F(t,\Sigma)$  the submanifold and  $\mathbf{H}(F)$ its mean curvature vector. Denote the tangent bundle of the submanifold by $T\Sigma_t$ and the   normal bundle by $N\Sigma_t$ respectively. There exists a natural induced complex structure $J(F)$  for $N\Sigma_t$ (rank two) via  simply rotating a vector in the normal space
by $\frac{\pi}{2}$ positively. To be  precise, for any point $z=F(t,x) \in\Sigma_ t$ and normal vector
$\nu\in N_{z} \Sigma_ t$, define $J(F)$ by letting  $J(F)\nu\bot \nu=0$ and $\omega(F _*(e_1 ),...,F_* (e_d ),\nu,J(F)\nu) > 0$, where
$\omega$ is the volume form of $\mathcal{N}$ and $\{e_ 1 ,...,e_d\}$ is an oriented basis of $T\Sigma_t$. We shall call
the binormal vector $J(F)\mathbf{H}(F)\in N\Sigma_t$  the skew mean curvature vector. And the skew mean curvature flow (SMCF) for immersions $F:{\Bbb I}\times \Sigma\to \mathcal{N}$ is
defined by
\begin{align}\label{yu}
\left\{
  \begin{array}{ll}
     {\partial_t}F= J(F)\mathbf{H}(F), & \hbox{ } \\
    F(0,x)=F_0(x),\mbox{ }x\in \Sigma. & \hbox{}
  \end{array}
\right.
\end{align}

In view of geometric flows, the SMCF evolves a codimension 2 submanifold along its binormal direction with a speed given by its
mean curvature. It is remarkable that SMCF was historically derived from both physics and  mathematics. The  motivations from physics  are  vortex filament equations  (VFE) ,  the localized
induction approximation (LIA) of the hydrodynamical Euler equations, and  asymptotic dynamics of vortices in
 superfluidity and superconductivity.

In fact,  the 1-dimensional SMCF in the Euclidean space $\Bbb R^3$ is the vortex filament equation
\begin{align}
\partial_t u= \partial_su\times  u,\mbox{ }u:(s,t)\in \Bbb R\times \Bbb R\longmapsto u(s,t)\in \Bbb R^3,
\end{align}
where $t$ denotes time, $s$ denotes the arc-length parameter of the curve $u(t,\cdot)$,  and $\times$ denotes
the cross product in $\Bbb R^3$. The VFE describes  the free motion
of a vortex filament, see Da Rios \cite{R} and Hasimoto \cite{Ha}.

The SMCF also appears in the study of asymptotic dynamics of vortices in
the context of superfluidity and superconductivity. For the Gross-Pitaevskii equation, which models  Bose-Einstein condensates,
physicists conjecture  that the vortices would evolve along the SMCF. This
was first verified by Lin \cite{L1} for the vortex filaments in $\Bbb R^3$. Similar phenomena were also  observed
for other  PDEs, for example,
 it was shown for the Ginzburg-Landau heat flow that the energy asymptotically concentrates on the codimension 2 vortices whose motion is governed by
the mean curvature flow.

The other motivation is the LIA of  hydrodynamical Euler equations, which describes the limit of
a generalized  Biot-Savart formula.  In fact, let $M\subset \Bbb R^{d}$  with $d\ge 3$ be a closed
oriented submanifold of codimension 2. Consider the vorticity 2-form
$\eta_{M}$ supported on this submanifold: $\eta_{M}=C\cdot\delta_{M}$. We shall call $M$ a
higher dimensional vortex filament or membrane. Then for any dimension $d\ge 3$ the divergence-free vector
 field $v$ in $\Bbb R^d$  satisfying $curl v = \eta_{M}$  is given by a generalized Biot-Savart formula,
which only holds for points away from $M$. In order to derive the formula of $v$ for points within $M$,
 one defines  truncation vector field $v_{\epsilon}$ by the above generalized Biot-Savart formula and some truncation.
 It was shown that the limit of $(\ln \epsilon)^{-1}v_{\epsilon}(x)$ for $x\in M$ as $\epsilon\to 0$ is the skew mean curvature
 vector of $M \hookrightarrow \Bbb R^{d}$. Hence,
the LIA approximation for a vortex membrane (or higher filament) $M$ in $\Bbb R^d$
up to a suitable scaling coincides with the SMCF. These facts were discovered  by  Shashikanth \cite{Sh} and generalized by  Khesin \cite{Kh}.

SMCF  also emerges in different mathematical problems. In study of  nonlinear Grassmannians, Haller-Vizman \cite{HV}
noted that SMCF is the Hamiltonian flow  of the volume functional on the space  consisting of all co-dimensional 2
immersions of a given Riemannian manifold. Notice that this space admits a generalized Marsden-Weinstein symplectic structure \cite{MW} which provides the volume form.
The SMCF of surfaces in $\Bbb R^4$ is included by a vast energy conserved motion project raised by Lin and his collaborators \cite{L2}. Terng \cite{Te} also proposed SMCF under the name of star MCF.

Let us describe the non-exhaustive list of works on SMCF. The case $d=1$, i.e. the  VFE, has been intensively studied by many authors from many views. We recommend the reader to read the survey Vega \cite{V} and Gomez's thesis \cite{G} for VFE.  The works on the case $d\ge 2$ are much less.  Song-Sun \cite{Song1}  proved local existence of SMCF for $F:\Sigma \to \Bbb R^{4}$ with compact oriented surface $\Sigma$. This was generalized by Song \cite{Song2} to $F:\Sigma \to \Bbb R^{d+2}$ with compact oriented surface $\Sigma$ for all $d\ge 2$.
Khesin-Yang \cite{KY} constructed an example showing that the SMCF can blow up in finite time. Jerrard proposed a notion of weak solutions to the SMCF in \cite{Je}.
 Song  \cite{Song3} proved that the Gauss map of a $d$ dimensional SMCF in $\Bbb R^{d+2}$ satisfies a Schr\"odinger map flow equation (see e.g. \cite{UT,DW,CSU,BIKT} ).

For $d\ge 2$, the global existence theory of SMCF is largely open even for small data. This is what we aim to solve in this paper. Let us consider the case $\Sigma=\Bbb R^d$, $\mathcal{N}=\Bbb R^{d+2}$ in SMCF. It is easy to see $F(t,x_1,...,x_d)=(x_1,...,x_d,0,0)$ is a solution of SMCF, i.e. the d-dimensional planes.
We consider the transversal perturbations of this plane, i.e., we seek for a graph like solution of the form
\begin{align}\label{Ffgm}
F(t,x)=(x_1,...,x_d,u_1(t,x),u_2(t,x)).
\end{align}
Our following main theorem states that if the initial perturbation $u_1(0,\cdot),u_2(0,\cdot)$ is sufficiently small, then SMCF has a global graph like solution and the asymptotic behaviors can be clearly determined.

In fact, we have
\begin{theorem}\label{th2}
Given $d\ge 2$, let $k$ be the smallest integer such that  $k>max({\frac{1}{2}(d+7),d+1})$. Assume that $u^0_1, u^0_2:\Bbb R^d\to \Bbb R$ are
functions belonging to  $H^{\sigma}$ for any $\sigma\ge 0$. Given $q\in (1,2)$ satisfying (\ref{huashan}),
there exists a sufficiently small constant $\epsilon>0$  such that if
\begin{align}\label{yuxx78}
\|u^0_1\|_{ W^{k,2}\cap W^{2,q}}+\|u^0_2\|_{W^{k,2}\cap W^{2,q}}\le \epsilon,
\end{align}
then there exists a  unique global smooth solution to SMCF of the form (\ref{Ffgm}) such that $u_1(t)=u^0_1$, $u_2(t)=u^0_2$ when $t=0$. Moreover, one has
\begin{align}\label{result}
\langle t\rangle^{\frac{d}{2}(\frac{2}{q}-1)}(\|u_1(t)\|_{W^{2,\frac{q}{q-1}}_x}+\|u_2(t)\|_{W^{2,\frac{q}{q-1}}_x})+  \|u_1(t)\|_{H^{k}_x}+\|u_2(t)\|_{H^k_x}\lesssim  \epsilon
\end{align}
for any $t\in\Bbb R$. And there exist time independent complex valued  functions $\phi_{\pm}\in H^2$ so that
\begin{align}\label{result3}
\lim\limits_{t\to \pm \infty}\|u_1(t)+ \mathrm{i}u_2(t) -e^{\mathrm{i}t\Delta}\phi_{\pm} \|_{H^{2}_x}=0.
\end{align}
\end{theorem}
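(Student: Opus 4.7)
The plan is to convert the geometric evolution into a quasilinear Schr\"odinger equation for the complex graph function $u := u_1 + \mathrm{i} u_2$, and then run a small-data bootstrap driven by the dispersive estimate for the free Schr\"odinger group. Inserting the graph ansatz (\ref{Ffgm}) into (\ref{yu}), the induced metric is $g_{ij}=\delta_{ij}+\partial_i u_1\partial_j u_1+\partial_i u_2\partial_j u_2$, the normal bundle is trivialized by rotating in the last two coordinates, and a direct computation rewrites the SMCF as
\begin{equation*}
\mathrm{i}\partial_t u = g^{ij}(u)\partial_i\partial_j u + N(u,\nabla u,\nabla^2 u),
\end{equation*}
whose linearization at $u\equiv 0$ is the free Schr\"odinger equation. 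Here $g^{ij}-\delta^{ij}=O(|\nabla u|^2)$ and $N$ collects terms that are at least cubic when counted by order in $u$, with at most two derivatives on any single factor.

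Next, I would introduce the bootstrap norm
\begin{equation*}
\|u\|_X := \sup_{t\in\Bbb R}\Big[\langle t\rangle^{\frac{d}{2}(\frac{2}{q}-1)}\|u(t)\|_{W^{2,q/(q-1)}_x}+\|u(t)\|_{H^k_x}\Big]
\end{equation*}
and prove $\|u\|_X\lesssim \epsilon$ by a continuity argument starting from a local existence result adapted from Song \cite{Song1,Song2}. The decay portion follows from Duhamel's formula together with the standard dispersive estimate $\|e^{\mathrm{i}t\Delta}f\|_{L^{q/(q-1)}}\lesssim |t|^{-\frac{d}{2}(\frac{2}{q}-1)}\|f\|_{L^q}$; the hypothesis (\ref{huashan}) on $q$ is chosen precisely so that, when the nonlinearity is estimated pointwise by $\|u(t)\|_{W^{2,q/(q-1)}}^{2}\,\|u(t)\|_{H^k}$, the resulting temporal weight $\langle t\rangle^{-d(\frac{2}{q}-1)}$ is integrable and a fractional-integral type bound closes the bootstrap.

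The principal obstacle is the quasilinear nature of the equation: a naive energy estimate for $\|u\|_{H^k}$ loses a derivative from the top-order piece $(g^{ij}-\delta^{ij})\partial_i\partial_j u$. To close the $H^k$ bound I would follow the strategies developed in the Schr\"odinger map literature (compare \cite{BIKT,CSU,DW}): either pass to the gauge/Gauss map framework identified by Song \cite{Song3}, which reformulates the unknown so that the quasilinear top order is neutralized, or construct modified energies in the spirit of Kato in which the derivative loss cancels thanks to the skew-Hermitian structure of $\mathrm{i}\Delta$, with the resulting first-order commutators absorbed using the $W^{2,q/(q-1)}$ decay of $\nabla u$. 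Either route yields $\tfrac{d}{dt}\|u\|_{H^k}^{2}\lesssim \|u\|_{W^{2,q/(q-1)}}^{2}\|u\|_{H^k}^{2}$, whose right-hand side is integrable in $t$ by the bootstrap.

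Finally, for the scattering statement (\ref{result3}), once $\|u\|_X<\infty$ has been established I would show
\begin{equation*}
\int_0^{\pm\infty}\bigl\|e^{-\mathrm{i}s\Delta}\bigl[(g^{ij}-\delta^{ij})\partial_i\partial_j u + N\bigr](s)\bigr\|_{H^2}\,ds<\infty
\end{equation*}
by combining the $W^{2,q/(q-1)}$ decay with Moser-type product estimates in $H^2$; then $\phi_\pm := u_0 - \mathrm{i}\int_0^{\pm\infty}e^{-\mathrm{i}s\Delta}[\,\cdots\,](s)\,ds$ lies in $H^2$ and yields the claimed asymptotic convergence.
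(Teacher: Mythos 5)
Your dispersive/bootstrap skeleton coincides with the paper's: reduce to a quasilinear Schr\"odinger equation $\mathrm{i}\partial_t\phi+\Delta\phi=O(\partial_x^2\phi\,|\partial_x\phi|^2)$ for $\phi=u_1+\mathrm{i}u_2$, run a continuity argument in exactly the norm $\sup_t[\langle t\rangle^{\frac{d}{2}(\frac{2}{q}-1)}\|\phi\|_{W^{2,q'}}+\|\phi\|_{H^k}]$, close the decay piece with the $L^q\to L^{q'}$ estimate plus Gagliardo--Nirenberg interpolation, and obtain scattering from the integrability already established. The divergence is at the decisive step, the top-order energy estimate, where you offer two sketched alternatives rather than an argument, and neither is what the paper does. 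The paper never gauges away the quasilinear term: it invokes the Song--Sun geometric energy identity $\frac{d}{dt}\int_\Sigma|\nabla^l\mathbf{A}|_g^2\,d\mu\lesssim\max_\Sigma|\mathbf{A}|_g^2\int_\Sigma|\nabla^l\mathbf{A}|_g^2\,d\mu$ for the second fundamental form, converts between $\|\mathbf{A}\|_{H^{l,2}}$ and $\|D^2u\|_{W^{l,2}}$ via the norm equivalences (\ref{e2})--(\ref{e3}) (the latter is only ``quadratic-good'' for $d\ge3$, which is why smallness is used there), and then recovers the missing low-order piece $\|\phi\|_{H^2}$ by a separate Duhamel estimate, since $\mathbf{A}$ only controls two or more derivatives of $u$.

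Of your two proposed substitutes, the Kato-type modified energy is in principle workable --- the exact equation (\ref{H3M}) has a \emph{real} principal coefficient $\frac{1}{\Lambda\sqrt{1+|\partial_xu_1|^2}}g^{ij}$, so after one integration by parts the apparent loss of a derivative in $\frac{d}{dt}\|\partial^k\phi\|_{L^2}^2$ reduces to first-order commutators weighted by $\partial g\sim\partial^2u\cdot\partial u$, whose $L^\infty$ norm decays integrably by the bootstrap --- but you would need to actually verify this cancellation, and it is morally the same computation the geometric identity packages. The Gauss-map route is more problematic as stated: Song's reformulation controls derivatives of the Gauss map, i.e.\ quantities built from $\mathbf{A}$, and does not by itself return $H^k$ bounds on $u$; you would still need the equivalence lemmas above. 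Finally, ``a local existence result adapted from Song'' is not a citation you can make casually: Song's local theory is for compact $\Sigma$, and the paper must either invoke Marzuola--Metcalfe--Tataru for small data or prove a new large-data local theory on $\Bbb R^d$ (its Theorem \ref{2K}, via parabolic regularization and Mantegazza's Sobolev inequality for tensors). For the small-data global theorem this is only a presentational gap, but it should be acknowledged.
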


Theorem \ref{th2} also shows the $d$-dimensional plane is transversally stable under the SMCF evolution. Let us describe the main idea in the proof of  Theorem \ref{th2}. The most challenging  problem in dealing with  SMCF is that it is highly quasilinear and the leading part is degenerate along the tangent bundle. In the graph like form (\ref{Ffgm}), the degenerateness can be avoided using a suitable equivalent formulation of SMFC. The quasilinear nature seems to be unavoidable as far as we know. In fact,  whether there exists a gauge transform to  make SMCF semilinear is largely open, see Khesin-Yang \cite{KY} for some discussions.   In this paper, we adopt the strategy  dating back to Klainerman \cite{Kl1}, the idea is that dispersive estimates of linear part provide time decay of solutions in $L^{p}$ norms with $p>2$ and the high order energy estimates give a chance to overcome the derivative loss in the nonlinear parts. The combination of dispersive estimates and energy estimates can close the bootstrap in the small data case, and thus finishing the proof. The technical part is to choose suitable working spaces in the bootstrap argument.

The local well-posedness also requires some efforts.  In fact, \cite{Song2} raised   local well-posedness for non-compact manifold $\Sigma$ as an open problem. The proof of \cite{Song1} used compactness of second fundamental forms corresponding to  a family of immersed  manifolds with uniform volumes, which is unavailable for general non-compact  manifolds. In the $d=2$ and small data case, \cite{Song1}'s argument indeed works for graph like solutions considered here. But for $d\ge 3$, we need some refinements, because the second fundamental form generally cannot control the graph function $u$ in $d\ge 3$. (See Sec. 3 for more discussions.) In this work, for the local Cauchy problem we can indeed deal with arbitrary large data, see Theorem \ref{2K} in Section 3.
In the view of PDEs, Theorem \ref{2K} is a local existence and uniqueness theorem for quasilinear Schr\"odinger equations. There are many works in  local well-posedness theory on  general quasilinear Schr\"odinger equations, see for instance the pioneering works by Kenig-Ponce-Vega \cite{Ke2,Ke3,Ke4}, Kenig-Ponce-Rolvung-Vega \cite{Ke1}, Marzuola-Metcalfe-Tataru \cite{MMT1,MMT,MMT3}. Generally, local well-posedness of quasilinear Schr\"odinger equations with large data needs additional non-trapping data conditions. Here, due to the  geometric structure of SMCF, no non-trapping conditions are needed. This advantage is mainly caused by  the high order derivative energy estimates enjoyed by the second fundamental form. In addition, we remark that the well-posedness in the second part of Theorem \ref{2K} includes    the existence of
a local solution, uniqueness, and continuous dependence on the initial data.

The paper is organized as follows. In Section 2, we reduce SMCF to a quasilinear Schr\"odinger equation. In Section 3, we present the local  well-posedness and arbitrary order energy estimates. In Section 4, we prove the main theorem for $d\ge 2$. In Section 5, we finish the whole proof.

\noindent{\bf Notations.}
The notation $A\lesssim B$ means there exists some universal constant $C>0$ such that $A\le CB$.
The notation $C_{\gamma_1,...,\gamma_n}$ denotes some constant depending on the parameters $\gamma_1,...,\gamma_n$, and generally it varies from line to line.
Denote $\langle x\rangle =\sqrt{1+|x|^2}$.
The Fourier transform $f\mapsto \widehat{f}$ is denoted by
\begin{align}
\widehat{f}(\xi)=\int_{\Bbb R^d} f(x)e^{-i\xi\cdot x}dx.
\end{align}
The usual Sobolev spaces $H^s$  are defined by
\begin{align}
\|f\|_{H^{s}}=\|\langle \xi\rangle^{s}\widehat{f}(\xi)\|_{L^2_{\xi}}.
\end{align}
Let $H^{\infty}=\cap^{\infty}_{s=0}H^s$.

\section{Master equation}

Let $F:\Sigma\to \Bbb R^{d+2}$, and $\mathbf{H}$ be the mean curvature vector. Then
\begin{align*}
\Delta_g F={\mathbf{H}},
\end{align*}
where $\Delta_g$ denotes the Laplacian on $\Sigma$ of the induced metric $g$ on $T\Sigma_t$ given by
\begin{align*}
g_{ij} =\partial_{x_i}F\cdot\partial_{x_j}F.
\end{align*}
Since one has
\begin{align*}
\Delta_g F^{\alpha}=g^{ij}(\frac{\partial^2}{\partial x_i\partial x_j}F^{\alpha}-\Gamma^{l}_{ij}\frac{\partial }{\partial x_l}F^{\alpha}),
\end{align*}
and the vector $g^{ij}\Gamma^{l}_{ij}\frac{\partial }{\partial x_l}F$ belongs to $T\Sigma_t$, we see
\begin{align*}
\mathbf{H}=(\Delta_g F)^{\bot}=\sum_{l=1,2}(g^{ij} \frac{\partial^2 F}{\partial x_i\partial x_j}\cdot\nu_l)\nu_l,
\end{align*}
where  $\{\nu_{l}\}^{2}_{l=1}$ denotes the orthonormal basis  of $N\Sigma_t$ such that $J(\nu_1)=\nu_2$, $J(\nu_2)=-\nu_1$.  Then, the SMCF can be written as
\begin{align*}
\partial_{t}F = (g^{ij} \frac{\partial^2 F}{\partial x_i\partial x_j }\cdot\nu_1)\nu_2-(g^{ij} \frac{\partial^2 F}{\partial x_i\partial x_j}\cdot\nu_2)\nu_1.
\end{align*}

Now, let us consider the case when $F$ is represented by a graph, i.e., $F(x,t)=(x_1,...,x_d,u_1,u_2)$, where $u_1,u_2$ are functions of $x,t$. It is easy to see
the $(F^1,...,F^d)$ components of this graph like map $F$ satisfies the afore $d$-equations of SMCF. Set the orthonormal basis $\{\nu_1,\nu_2\}$  of $N\Sigma_t$ to be
\begin{align*}
\nu_{1}&=\frac{1}{\sqrt{1+|\partial_xu_1|^2}}(\partial_xu_1,-1,0) \\
\nu_{2}&=\frac{1}{|\widetilde{\nu}_2|}\widetilde{\nu}_2\\
\widetilde{\nu}_2&= (\partial_{x}u_2,0,-1) - [(\partial_{x}u_2,0,-1)\cdot\nu_1]\nu_1,
\end{align*}
where we denote $\partial_x u=(\partial_{x_1}u,..., \partial_{x_n}u)$, and $|\cdot|$ denotes the Euclidean norm in $\Bbb R^{d}$ or $\Bbb R^{d+2}$. Further calculations give
\begin{align*}
\nu_{2}&=\frac{1}{\Lambda} (\partial_x u_2-\frac{\partial_{x}u_2\cdot\partial_x u_1}{1+|\partial_x u_1|^2}\partial_x u_1, \frac{\partial_x u_1\cdot \partial_x u_2}{1+|\partial_x u_1|^2}, -1)\\
\Lambda&=\left(|\partial_x u_2-\frac{\partial_{x}u_2\cdot\partial_x u_1}{1+|\partial_x u_1|^2}\partial_x u_1|^2+|\frac{\partial_x u_1\cdot \partial_x u_2}{1+|\partial_x u_1|^2}|^2+1\right)^{\frac{1}{2}}.
\end{align*}

Therefore,  the SMCF reduces  to
\begin{align}\label{sml}
\left\{
  \begin{array}{ll}
    \partial_t u_1&= g^{ij}(\frac{\partial^2 F}{\partial x_i\partial x_j  }\cdot \nu_2 )\frac{1}{\sqrt{1+|\partial_x u_1|^2}} + g^{ij}(\frac{\partial^2 F}{\partial x_i\partial x_j }\cdot \nu_1 )\frac{\partial_x u_1\cdot \partial_x u_2}{(1+|\partial_x u_1|^2)\Lambda} \hbox{ } \\
    \partial_t u_2&=- g^{ij}(\frac{\partial^2 F}{\partial x_i\partial x_j }\cdot \nu_1 )\frac{1}{\Lambda}. \hbox{ }
  \end{array}
\right.
\end{align}
To clarify the main linear part of the above equation, we calculate the expansions of $\nu_1,\nu_2,\Lambda$.
We observe that when $|\nabla u_1|+|\nabla u_2|$ is sufficiently small, $\nu_1,\nu_2,\Lambda$ have the expansions
\begin{align*}
\nu_1&=(\partial_x u_1,-1,0)[1-\frac{1}{2}|\partial_x u_1|^2+O(|\partial_x u|^4)]\\
\Lambda&=1+O(|\partial_x u|^2)\\
\nu_2&=(\partial_x u_2-\frac{\partial_{x}u_2\cdot\partial_x u_1}{1+|\partial_x u_1|^2}\partial_x u_1, \frac{\partial_x u_1\cdot \partial_x u_2}{1+|\partial_x u_1|^2}, -1)[1+O(|\partial_x u|^2)].
\end{align*}
Thus one has
\begin{align*}
\frac{\partial^2F}{\partial x_i\partial x_j  }\cdot \nu_2=-\frac{\partial^2u_2}{\partial x_i\partial x_j  }+ O({\partial^2_xu }|\partial_xu|^2)\\
\frac{\partial^2F}{\partial x_i\partial x_j  }\cdot \nu_1=-\frac{\partial^2u_1}{\partial x_i\partial x_j  }+ O({\partial^2_xu }|\partial_xu|^2).
\end{align*}
Now, we see (\ref{sml}) reduces to
\begin{align}\label{smm2}
\left\{
  \begin{array}{ll}
    \partial_t u_1&= -g^{ij}\frac{\partial^2u_2}{\partial x_i\partial x_j  }+ O(g^{ij}{\partial^2_{ij}u }|\partial_xu|^2) \hbox{ } \\
    \partial_t u_2&= g^{ij}\frac{\partial^2u_1}{\partial x_i\partial x_j }+O(g^{ij}{\partial^2_{ij}u }|\partial_xu|^2). \hbox{ }
  \end{array}
\right.
\end{align}
Let us calculate the expansion of $g^{ij}$. In fact, since
\begin{align}\label{Hjk}
g_{ij} =\partial_{x_i}F\cdot\partial_{x_j}F =\delta_{ij}+\partial_{i}u\cdot \partial_{j}u,
\end{align}
one has by straightforward calculations  that
\begin{align*}
g^{ij}=\delta_{ij}-\frac{\partial_{x_i}u\cdot \partial_{j}u}{1+|\partial_xu|^2}=\delta_{ij}+O(|\partial_x u|^2).
\end{align*}
Therefore,   (\ref{smm2}) can be written as
\begin{align}\label{smk3}
\left\{
  \begin{array}{ll}
    \partial_t u_1&=  -\Delta u_2  + O({\partial^2_xu }|\partial_xu|^2) \hbox{ } \\
    \partial_t u_2&=   \Delta u_1  + O({\partial^2_xu }|\partial_xu|^2), \hbox{ }
  \end{array}
\right.
\end{align}
where the $O({\partial^2_xu }|\partial_xu|^2)$ in fact contains many terms including the  leading order cubic term like ${\partial^2_xu }|\partial_xu|^2$ and remainder terms of higher powers of $|\partial_x u|$.

Let $\phi=u_1+ \mathrm{{i}}u_2$. We see  (\ref{smk3}) is indeed a quasilinear Schr\"odinger equation with at least cubic interactions:
\begin{align}\label{mss}
 \mathrm{i}\partial_t \phi+\Delta \phi= O({\partial^2_x\phi }|\partial_x\phi|^2),
\end{align}
where the RHS of (\ref{mss}) can be written as
\begin{align}\label{ddmss}
\sum c_{iji'j'} \partial^2_{x_ix_j}\phi^{\pm}  \partial_{x_{i'}}\phi^{\pm}\partial_{x_{j'}}\phi^{\pm}+\mathcal{R}.
\end{align}
Here, $c_{iji'j'}$ are universal constants,  $\phi^{+}$ and $\phi^{-}$ denote $\phi$ and $\bar{\phi}$ respectively, and the remainder  $\mathcal{R}$ point-wisely satisfies
\begin{align}\label{mq}
|\partial^{l}_x \mathcal{R}|\le C_{l}\sum_{l_1+...+l_j\le l,4\le j\le 2^{10l} }|\partial^{l_1}_x\partial^2_{x}\phi^{\pm}|...|\partial^{l_j}_x\partial_x\phi^{\pm}|
\end{align}
for any $l\ge 0$ provided that $\|\partial_x\phi^{\pm}\|_{L^{\infty}_{t,x}}$ is sufficiently small.

Besides the above approximate equations near 0, one also has the following exact equation,
\begin{align*}
\left\{
  \begin{array}{ll}
    \partial_t u_1&=  -\frac{1}{\Lambda  \sqrt{1+|\partial_x u_1|^2}}g^{ij} \partial^2_{x_ix_j}u_2  \hbox{ } \\
    \partial_t u_2&=   \frac{1}{\Lambda  \sqrt{1+|\partial_x u_1|^2}}g^{ij}\partial^2_{x_ix_j} u_1, \hbox{ }
  \end{array}
\right.
\end{align*}
which further gives
\begin{align}\label{H3M}
i \partial_t\phi=\frac{1}{\Lambda  \sqrt{1+|\partial_x u_1|^2}}g^{ij} \partial^2_{x_ix_j}\phi.
\end{align}

Generally, a further detailed calculation reveals that (\ref{mss})-(\ref{mq})  hold as well provided that  $\|\partial_x\phi^{\pm}\|_{L^{\infty}_{t,x}}$ is finite. In fact, we have
\begin{remark}\label{12}
 If $\|\partial_x\phi^{\pm}\|_{L^{\infty}_{t,x}}<\infty$, then given $l\ge 0$ there exists a constant $C>0$ depending only on $l$  such that (\ref{mss}), (\ref{ddmss}) and (\ref{mq}) hold.
\end{remark}
\section{Local well-posedness and Energy estimates}

Suppose $F :{\Bbb I}\times \Sigma\to  \Bbb R^{d+2}$ is a solution to the SMCF.  Recall that for each $t\in \Bbb I$, $g=g(t)$
denotes the induced metric on $\Sigma$. Denote the associated volume form on $\Sigma$
by $d\mu=d\mu(t)$.  Consider the pullback bundle $F^*T \Bbb R^{d+2}$ defined   over the base manifold $\Bbb I\times \Sigma$. This bundle splits naturally into  the ``spatial'' subbundle $\mathfrak{{H}}$ and the normal bundle $\mathfrak{N}$. Pulling back the metric and connection of $\Bbb R^{d+2}$ naturally  induces a metric $g^{\mathfrak{N}}$ and connection $\nabla^{\mathfrak{N} }$ defined on the bundle $\mathfrak{N}$, see \cite{Ba} for a detailed presentation. For simplicity, we write $\nabla$  instead of  $\nabla^{\mathfrak{N} }$.

The energy estimates are due to Song-Sun \cite{Song1} and Song \cite{Song2}. We recall the results in the following  lemma.

Let $F:\Bbb I\times \Sigma\to \Bbb R^{d+m}$ be a map given by
\begin{align}
F(t,x_1,...,x_d)=(x_1,...,x_d,u_1(t,x_1,..,x_d),...,u_{m}(t,x_1,...,x_d)).
\end{align}
Let $\mathbf{A}$ denote the corresponding second fundamental of $\Sigma_t:=Graph(u)$.
Given an integer $l\ge 0$ and a number $p\in [1,\infty)$, define the Sobolev norm of ${\bf A}$ by
\begin{align}\label{hhh1}
\|{\bf A}\|_{H^{l,p}}=\left(\sum^{l}_{k=0}\int_{\Sigma} |\nabla^{l} {\bf A}|^p_{g}d\mu\right)^{\frac{1}{p}}.
\end{align}
If $p=\infty$, we define $\|{\bf A}\|_{H^{l,\infty}}$ by
\begin{align*}
\|{\bf A}\|_{H^{l,\infty}}=\sum^{l}_{k=0}\||\nabla^{l} {\bf A}|_{g}\|_{L^{\infty}}.
\end{align*}

And define the usual Sobolev norm of the Hessian  $D^2 u$ of $u$ by
\begin{align}\label{hhh}
\|D^2 u\|_{W^{l,p}}=\left(\sum^{l}_{k=0}\int_{\Sigma} |D^{l} D^2u|^pdx\right)^{\frac{1}{p}}.
\end{align}

\begin{lemma}[\cite{Song1,Song2}]\label{e}
With above notions and notations, considering $F:\Bbb I\times \Bbb R^d\to \Bbb R^{d+2}$, there holds point-wisely that
\begin{align}
|{\bf A}|^2_g&\le|D^2 u|^2\le (1+|Du|^2)^3|{\bf A}|^2_g\label{e1},
\end{align}
and for any given $l\ge 0$ there exists a polynomial $Q_{l}$ depending only on $l$ such that
\begin{align}
|\nabla^{l}{\bf A}|_g&\le|D^{l+2} u|+Q_{l}(|Du|)\sum|D^{l_1+1}u|...|D^{l_j+1}u|\label{e2s}\\
|D^{l+2}u|&\le (1+|Du|^2)^{\frac{l+3}{2}}|\nabla^2{\bf A}|_g  +Q_{l}(|Du|)\sum|D^{l_1+1}u|...|D^{l_j+1}u|,\label{e3s}
\end{align}
where the summations are taken over the indices $(l_1,...,l_j)$ satisfying
\begin{align*}
l_1+...+l_j=l+1, \mbox{ }l_1\ge \l_2\ge ...\ge l_j, \mbox{ } l\ge l_i\ge 1.
\end{align*}
Let $|Du|\le \alpha$, then for any $l\ge 0$,
\begin{align}
\|{\bf A}\|_{H^{l,2}}\le C_{\alpha} \sum^{l+1}_{k=1}\|D^2u\|^{k}_{W^{l,2}}\label{e2}.
\end{align}
Let $|Du|\le \alpha$,  $|D^2u|\le \beta$, then for any $l\ge 0$,
\begin{align}\label{e3}
\|D^2u\|_{W^{l,2}}\le C_{\alpha,\beta}( \sum^{l}_{k=1} \|{\bf A}\|^{k}_{H^{l,2}}+\sum^{l}_{k=2} \|D^2u\|^{k}_{W^{l,2}}).
\end{align}
The smooth solution $F(t,x)$ of SMCF satisfies
\begin{align}
\frac{d}{dt}\int_{\Sigma }|\nabla^{l} {\bf A} |^2_{g}d\mu&\le C \max_{\Sigma }|{\bf A}|^2_{g} \int_{\Sigma}|\nabla^{l} {\bf A} |^2_{g}d\mu,\label{e4}
\end{align}
where $C>0$ is a universal constant depending only on $d,l$.
Given $p\ge 2$, the smooth solution $F(t,x)$ of SMCF satisfies
\begin{align}\label{Maz1}
\frac{d}{dt}\int_{\Sigma }| {\bf A} |^p_{g}d\mu&\le C (1+\max_{\Sigma }|{\bf A}|^2_{g}) \int_{\Sigma}|  {\bf A} |^p_{g}d\mu+C\int_{\Sigma}| \nabla {\bf A} |^p_{g}d\mu,
\end{align}
where $C>0$ is a universal constant depending only on $d,p$. Along the SMCF, the induced  metric $g$ with $g_{ij}=\partial_{x_i}F\cdot\partial_{x_j} F$ satisfies
\begin{align}\label{Mazw}
 {\partial}_tg_{ij}=-2\langle J{\bf H},{\bf A}(\partial_{x_i}F,\partial_{x_j}F)\rangle,
\end{align}
and the associated volume form $\mu(t)$ satisfies $\partial_t\mu(t)=0$.
\end{lemma}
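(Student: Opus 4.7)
The plan is to organize the proof in three blocks: pointwise comparisons between ${\bf A}$ and $D^2u$, integrated Sobolev bounds, and the evolution identities together with the energy monotonicity. For the graph $F(x)=(x,u(x))$ the second fundamental form is ${\bf A}_{ij}=(D^2_{ij}F)^{\perp}$ where $D^2_{ij}F=(0,D^2_{ij}u)$. Because orthogonal projection does not increase Euclidean length and $g^{ij}$ has eigenvalues between $(1+|Du|^2)^{-1}$ and $1$, the upper estimate in (\ref{e1}) is immediate; the lower estimate is obtained by splitting $(0,D^2_{ij}u)$ into tangent and normal components, using the explicit frame $\partial_k F=(e_k,D_ku)$ to bound the tangent component pointwise by $|Du|\,|{\bf A}|_g$ times a power of $(1+|Du|^2)$, and then inverting. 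For (\ref{e2s})--(\ref{e3s}) I would induct on $l$: each application of $\nabla$ on the normal bundle differs from the flat $D$ by Christoffel symbols of shape $g^{kl}(D^2u\cdot Du)$ and analogous normal-connection forms, which are polynomial in $Du$ and multilinear in lower-order derivatives of $u$; iterating reproduces exactly the remainder structure in (\ref{e2s})--(\ref{e3s}).

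Integrating these pointwise identities, using $|Du|\le\alpha$ (and $|D^2u|\le\beta$ for (\ref{e3})) to absorb $Q_l(|Du|)$ into constants, and applying Gagliardo--Nirenberg to the products $|D^{l_1+1}u|\cdots|D^{l_j+1}u|$ yields (\ref{e2})--(\ref{e3}); in each product the total number of derivatives is $l+j$, which matches the available $W^{l,2}$ scale on $D^2u$ and allows interpolation to close. The metric identity (\ref{Mazw}) is then a direct computation: differentiating $g_{ij}=\langle\partial_iF,\partial_jF\rangle$ in time, using $\partial_tF=J{\bf H}$, and transferring tangential derivatives onto $\partial_jF$ by compatibility of the ambient connection produces $\langle J{\bf H},{\bf A}_{ij}\rangle$. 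Preservation of $d\mu$ follows by combining (\ref{Mazw}) with $g^{ij}{\bf A}_{ij}={\bf H}$, giving $g^{ij}\partial_tg_{ij}=-2\langle J{\bf H},{\bf H}\rangle=0$ thanks to the skew-symmetry of $J$ on the rank-$2$ normal bundle.

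The principal obstacle is the energy inequalities (\ref{e4}) and (\ref{Maz1}). The strategy is to first derive, via Simons-type identities on $\Sigma_t$, an evolution equation of the schematic form $\partial_t{\bf A}=J\Delta^{\perp}{\bf A}+{\bf A}\ast{\bf A}\ast{\bf A}$, then commute with $\nabla^l$ and pair with $\nabla^l{\bf A}$. Because $\partial_td\mu=0$, no extra volume-form term is generated, and the computation reduces to integration by parts on $\Sigma_t$. The would-be highest-order contribution $\int\langle\nabla^lJ\Delta^{\perp}{\bf A},\nabla^l{\bf A}\rangle d\mu$ vanishes because $J\Delta^{\perp}$ is skew-adjoint on the normal bundle up to curvature commutators; the surviving commutator and cubic terms each carry $2l+2$ derivatives distributed over three copies of ${\bf A}$, so H\"older and Sobolev embeddings bound them by $\max_{\Sigma_t}|{\bf A}|_g^2\int|\nabla^l{\bf A}|_g^2 d\mu$, giving (\ref{e4}). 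For (\ref{Maz1}) one pairs the same evolution equation with $|{\bf A}|^{p-2}{\bf A}$; since SMCF has no dissipation (the essential feature distinguishing it from MCF), integration by parts on the $J\Delta^{\perp}$ term cannot return a good sign and instead produces the additional $\int|\nabla{\bf A}|^p_g d\mu$ on the right-hand side. The step I expect to spend most care on is the commutator bookkeeping $[\nabla^l,J\Delta^{\perp}]$ and the rigorous verification that every term so generated is absorbed by the factor $\max|{\bf A}|^2_g$ without consuming extra derivatives of ${\bf A}$.
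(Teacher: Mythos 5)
The paper does not actually prove this lemma: it is quoted wholesale from Song--Sun and Song, with only two added remarks (that (\ref{e3}) follows from (\ref{e3s}), and that the compact-$\Sigma$ proofs of (\ref{e1}), (\ref{e2}), (\ref{e4}) extend to graphs over $\Bbb R^d$). Your reconstruction follows the same architecture as those cited proofs -- projection estimates for the graph frame, induction on $l$ through the Christoffel corrections for (\ref{e2s})--(\ref{e3s}), the Simons-type evolution $\nabla_t{\bf A}=J\Delta^{\perp}{\bf A}+{\bf A}\ast{\bf A}\ast{\bf A}$ with the skew-adjointness of $J\Delta^{\perp}$ killing the top-order pairing, and the $|{\bf A}|^{p-2}{\bf A}$ multiplier for (\ref{Maz1}) -- so it is a faithful and essentially correct outline rather than a genuinely new route. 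Two points deserve sharpening. First, where you invoke ``H\"older and Sobolev embeddings'' to absorb the commutator and cubic terms into $\max_{\Sigma}|{\bf A}|^2_g\int|\nabla^l{\bf A}|^2_g\,d\mu$, the tool that actually closes the argument is Hamilton's metric-independent interpolation inequality for tensors (Lemma \ref{Hami}); generic Sobolev embeddings on $(\Sigma_t,g(t))$ carry constants depending on the evolving metric, which is exactly why the paper avoids them elsewhere. The same issue arises in (\ref{e2}): interpolating the products $|D^{l_1+1}u|\cdots|D^{l_j+1}u|$ purely within the $W^{l,2}$ scale of $D^2u$ fails for large $d$ and small $l$ (e.g.\ $\|D^2u\|_{L^4}^2$ against $\|D^2u\|_{W^{1,2}}$ when $d\ge 5$); one must anchor the Gagliardo--Nirenberg interpolation at $\|Du\|_{L^\infty}\le\alpha$, and, for (\ref{e3}), arrange it so the product terms come out at least quadratic in $\|D^2u\|_{W^{l,2}}$ -- a structural feature the paper relies on later for the small-data bootstrap. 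Second, your integrations by parts are performed on the noncompact $\Sigma_t$; the paper explicitly flags that the cited proofs assume compactness, so the finiteness of the integrals and the absence of boundary terms (guaranteed here by the $H^\infty$ regularity of $u$ and the uniform equivalence of $d\mu$ with $dx$ under $|Du|\le\alpha$) should be stated rather than left implicit.
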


We remark that although (\ref{e3}) was not directly stated in \cite{Song1,Song2},  it indeed  follows by (\ref{e3s}). In $d=2$, \cite{Song1} proved a stronger result:
\begin{align}\label{e3p}
\|D^2u\|_{W^{l,2}}\le C_{\alpha,\beta} \sum^{l}_{k=1} \|{\bf A}\|^{k}_{H^{l,2}}.
\end{align}
 for any $l\ge 0$, if  $|Du|\le \alpha$,  $|D^2u|\le \beta$. But in $d\ge 3$ one only has  (\ref{e3}), and  (\ref{e3p}) fails. But   (\ref{e3})  suffices to work in the small data case by noting that the RHS of (\ref{e3}) is at least quadratic in $\|D^2u\|_{W^{l,2}}$.

We also remark that although (\ref{e1}), (\ref{e2}),  (\ref{e4}) were proved in \cite{Song1,Song2} for compact $\Sigma$, its proof works for general manifolds where Gagliardo-Nirenberg inequalities hold.

\subsection{Some Sobolev inequalities of tensors}

The following two lemmas are useful for graph solutions.
\begin{lemma}\label{hbnjkmkk}
Let  $u:\Bbb R^d\to \Bbb R^{d+2}$ be a graph, $u(x)=(x_1,...,x_d,u^1(x),u^2(x))$. Suppose that
\begin{align*}
\|Du\|_{L^{\infty}}\le \alpha,
\end{align*}
and the corresponding second fundamental form ${\bf A}$ satisfies
\begin{align*}
\|{\bf A}\|_{H^{0,\infty}}=\beta<\infty, \mbox{ } \|{\bf A}\|_{H^{k,2}}=B_{k}<\infty
\end{align*}
for any $k\in \Bbb Z_+$. Then given $k\in\Bbb Z_+$, $p\in [2,\infty)$, there exist an integer $K\in\Bbb Z_+$ depending only on $k,d,p$, and a polynomial $P_{K}$ such that
\begin{align}\label{Vbnm}
\|D^2u\|_{W^{k,p}}\lesssim_{\beta,\alpha} P_{K}(B_{K}).
\end{align}
\end{lemma}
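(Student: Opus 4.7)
The plan is to combine the pointwise estimate (\ref{e3s}) from Lemma \ref{e} with Gagliardo--Nirenberg interpolation, converting the intrinsic $L^2$ control of $\nabla^k \mathbf{A}$ into $L^p$ control of $D^2 u$, and to proceed by induction on the number $k$ of derivatives.

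First, I would promote the hypothesis $\|\mathbf{A}\|_{L^\infty}\le\beta$ to pointwise bounds on $\nabla^j \mathbf{A}$ and on $D^{j+2}u$ for every $j\ge 0$. Since $\|Du\|_{L^\infty}\le\alpha$, the matrix $g_{ij}=\delta_{ij}+\partial_i u\cdot\partial_j u$ has eigenvalues between $1$ and $1+\alpha^2$, so the intrinsic metric and volume form on $\Sigma$ are uniformly equivalent to the Euclidean ones, and the passage between covariant derivatives of $\mathbf{A}$ and their coordinate derivatives costs only products of Christoffel symbols (themselves polynomial in $D^2u$, hence already controlled by (\ref{e1})). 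Gagliardo--Nirenberg on $\mathbb{R}^d$ then yields, for each $j\ge 0$ and any $m>d/2$, an inequality $\|\nabla^j \mathbf{A}\|_{L^\infty}\lesssim_\alpha R_j(\beta,B_{j+m})$ for some polynomial $R_j$. Feeding these into (\ref{e3s}) one derivative at a time, with (\ref{e1}) as the base case, produces pointwise bounds $\|D^{j+2}u\|_{L^\infty}\le S_j(\beta,B_{K_j})$ for some integer $K_j=K_j(d,j)$.

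Next, I would prove the target inequality (\ref{Vbnm}) by induction on $k$. For $k=0$, (\ref{e3s}) with $l=0$ reduces to $|D^2u|\le C_\alpha |\mathbf{A}|_g$; interpolating between $\|\mathbf{A}\|_{L^2}\le B_0$ and $\|\mathbf{A}\|_{L^\infty}\le\beta$ gives $\|D^2u\|_{L^p}\lesssim_\alpha \beta^{1-2/p}B_0^{2/p}$. For the inductive step, assume the conclusion for $k-1$ and apply (\ref{e3s}) with $l=k$. The linear term $(1+|Du|^2)^{(k+3)/2}|\nabla^k\mathbf{A}|_g$ contributes at most $C_\alpha\|\nabla^k\mathbf{A}\|_{L^p}$, which I would bound by interpolating between $\|\nabla^k\mathbf{A}\|_{L^2}\le B_k$ and the $L^\infty$ bound from Step~1. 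For each nonlinear product $|D^{l_1+1}u|\cdots|D^{l_j+1}u|$ with $\sum l_i=k+1$ and $1\le l_i\le k$, I would place the factor of highest order in $L^p$ (which, because $l_i+1\le k+1$, is controlled by the inductive hypothesis on $\|D^2u\|_{W^{k-1,p}}$) and the remaining factors in $L^\infty$ using Step~1. Summing over the finitely many admissible multi-indices gives $\|D^2u\|_{W^{k,p}}\lesssim_{\alpha,\beta} P_K(B_K)$ for an appropriate $K=K(k,d,p)$.

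The main obstacle is purely combinatorial bookkeeping: tracking how many $L^2$ derivatives of $\mathbf{A}$ each Gagliardo--Nirenberg interpolation consumes so that every factor is dominated by a single $B_K$, and how the resulting exponents assemble into the final polynomial $P_K$. No genuinely new analytic input beyond Lemma~\ref{e} and standard interpolation on $\mathbb{R}^d$ is required, and the estimate does not rely on smallness of the data, only on finiteness of $\alpha$, $\beta$, and the $B_k$'s.
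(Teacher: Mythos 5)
Your overall architecture (integrate (\ref{e3s}), control the linear term by interpolation on ${\bf A}$, induct on $k$) matches the paper's, but there is a genuine gap at the one point where the lemma is actually delicate: your use of standard Gagliardo--Nirenberg on $\Bbb R^d$ to bound $\|\nabla^j{\bf A}\|_{L^\infty}$ (and more generally $\|\nabla^j{\bf A}\|_{L^p}$ for $p>2$) in terms of $\beta$ and $B_{j+m}$. The quantities $\nabla^j{\bf A}$ are covariant derivatives of a tensor with respect to the induced metric $g$, not coordinate derivatives of a function, so the Euclidean Gagliardo--Nirenberg inequality does not apply to them directly; and an intrinsic Gagliardo--Nirenberg inequality on $\Sigma$ would carry constants depending on the geometry of $g$, which is exactly what you are trying to control. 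Your proposed fix --- converting covariant to coordinate derivatives at the cost of ``products of Christoffel symbols, themselves polynomial in $D^2u$, hence already controlled by (\ref{e1})'' --- is only true for a single covariant derivative. For $\nabla^j{\bf A}$ with $j\ge 2$ the conversion produces \emph{derivatives} of the Christoffel symbols up to order $j-1$, i.e.\ terms involving $D^3u,\dots,D^{j+1}u$, which are precisely the unknowns of the induction; the argument is circular as stated. This is not bookkeeping: it is the reason the paper inserts Remark 3.3 warning that the usual Gagliardo--Nirenberg inequalities cannot be applied to ${\bf A}$.

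The paper's resolution is to replace Euclidean interpolation by Hamilton's interpolation inequality for tensors (Lemma \ref{Hami}), whose constant is independent of the metric and the connection: it gives directly
\begin{align*}
\int_{\Sigma}|\nabla^j{\bf A}|^{\frac{2i}{j}}\,d\mu\le C\max_{\Sigma}|{\bf A}|^{2(\frac{i}{j}-1)}\int_{\Sigma}|\nabla^i{\bf A}|^2\,d\mu,
\end{align*}
hence $\|{\bf A}\|_{H^{j,2i/j}}\le C(\alpha,\beta,B_i)$ for $1\le j\le i$, which is all that is needed to close the induction in (\ref{ffe3s}) (the nonlinear products are handled by H\"older with every factor in some $L^{p_i}$, $p_i<\infty$, rather than by an $L^p\times L^\infty\times\cdots$ splitting, so no pointwise bounds on $D^{j+2}u$ are required). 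If you substitute Lemma \ref{Hami} for your Gagliardo--Nirenberg step and drop the attempt to produce $L^\infty$ bounds on all higher derivatives of $u$, your argument becomes essentially the paper's proof. Your base case $k=0$ and the inductive structure are otherwise correct.
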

\begin{proof}
Since $\|Du\|_{L^{\infty}_x }\le  \alpha$,
the volume $\mu(\cdot)$ induced by immersion $u$ and the usual volume $|\cdot|$ in Euclidean space $\Bbb R^d$ satisfy
\begin{align}\label{Bnhnm}
|O|\lesssim_{d,\alpha} \mu(O)\lesssim_{d,\alpha}  |O|
\end{align}
for any measurable set $O$ in $\Bbb R^d$.
Given $l\ge 1$, $p\in[2,\infty)$, integrating (\ref{e3s}) in $\Bbb R^d$ we infer from (\ref{Bnhnm}) that
\begin{align}
\|D^{l+2}u\|_{L^p}\lesssim \|{\bf A}\|_{H^{l,p}} + C(\alpha)\sum \||D^{l_1+1}u|...|D^{l_j+1}u|\|_{L^p },\label{ffe3s}
\end{align}
where the summations are taken over the indices $(l_1,...,l_j)$ satisfying
\begin{align*}
l_1+...+l_j=l+1, \mbox{ }l_1\ge \l_2\ge ...\ge l_j, \mbox{ } l\ge l_i\ge 1.
\end{align*}
Let's begin with $\|D^{2}u\|_{L^2\cap L^{\infty}}$. By (\ref{e1}), one has
\begin{align}\label{1cvb}
\|D^2u\|_{L^2\cap L^{\infty}}\lesssim_{\alpha}\|{\bf A}\|_{H^{0,2} \cap H^{0,\infty} }\le_{\alpha} B_0+\beta.
\end{align}
The other high derivative terms need more efforts.

Applying Hamilton's interpolation inequality (see Lemma \ref{Hami}) to ${\bf A}$ shows
\begin{align*}
\int_{\Sigma} |\nabla^{j} {\bf A}|^{\frac{2i}{j}}d\mu\le C\max_{\Sigma}|{\bf A}|^{2(\frac{i}{j}-1)}\int_{\Sigma} |\nabla^{i} {\bf A}|^{2}d\mu,
\end{align*}
which implies
\begin{align}\label{ghbn}
\|{\bf A}\|_{H^{j,\frac{2i}{j}}}\le C(\alpha,\beta,B_i)
\end{align}
for any $1\le j\le i$.

Given  any $p\in[2,\infty)$, taking $l=1$ in (\ref{ffe3s}) we find
\begin{align}
\|D^{3}u\|_{L^p_x}\lesssim  \|{\bf A}\|_{H^{3,p}} +  C(\alpha)\||D^2u||D^2u|\|_{L^p},
\end{align}
which,  together with  (\ref{1cvb}) and (\ref{ghbn}),  further gives
\begin{align}
\|D^{3}u\|_{L^p_x}\lesssim_{\alpha,\beta}  C(B_{[\frac{3p}{2}]+1}) + C(B_0).
\end{align}
Now, let us prove (\ref{Vbnm}) by induction. If for some $k=\nu\ge 1$  (\ref{Vbnm}) has been verified, let us prove   (\ref{Vbnm}) for $k=\nu+1$.
For this, taking $l=\nu+1$ in (\ref{ffe3s}) we see the summation indexes $\{l_1,...,l_j\}$ in the   RHS of  (\ref{ffe3s}) are all   smaller than or equal to $\nu+1$.
Thus the RHS of (\ref{ffe3s}) only involves derivatives of $u$ of order lower than or equal to $\nu+2$. Then by induction assumption,  (\ref{Vbnm}) also holds in the $k=\nu+1$ case.
\end{proof}

\begin{remark}
In the above proof, usual Gagliardo-Nirenberg  inequalities cannot be applied to the second fundamental form  ${\bf A}$,
since the embedding constants may depend on the metric $g$ which is also $t$ dependent. So one takes Hamilton's interpolation inequality which is free of metrics.
\end{remark}

\begin{lemma}\label{Maz}
Let $u:\Bbb R^d\to \Bbb R^{d+2}$ be a graph, $u(x)=(x_1,...,x_d,u^1(x),u^2(x))$. Denote $B_r(y)$ to be the ball of radius $r>0$  in $\Bbb R^{d+2}$ centered at $y\in\Bbb R^{d+2}$. Let $\Sigma=u(\Bbb R^{d})$, and denote the induced metric, measure, the second fundamental form, the mean curvature vector  associated with  the immersion  $u$ by $g$, $\mu$, ${\bf A}$, and ${\bf H}$ respectively. View  $\mu$ as a measure in $\Bbb R^{d+2}$ with support in $\Sigma$. If
$
\mu(B_1(y)\bigcap \Sigma)+\|{\bf H}\|_{L^{d+\gamma}}\le D$ for any $y\in \Bbb R^{d+2}$ and some $D>0,\gamma>0$, then given $p>d$, for every covariant  tensor ${\bf T}$ there holds
\begin{align*}
\max_{\Sigma}|{\bf T}|_{g}\le C\left((\int_{\Sigma}|\nabla {\bf T}|^p_gd\mu)^{\frac{1}{p}}+(\int_{\Sigma}| {\bf T}|^p_gd\mu)^{\frac{1}{p}}\right),
\end{align*}
where $C$ depends only on $d,p,\gamma,D$.
\end{lemma}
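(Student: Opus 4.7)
The strategy is to first reduce from tensors to scalar functions via Kato's inequality, and then establish a Morrey-type embedding $W^{1,p}(\Sigma,g)\hookrightarrow L^\infty(\Sigma,g)$ for $p>d$ by combining the Michael--Simon Sobolev inequality on submanifolds with a Moser-style iteration on exponents. The geometric hypotheses $\mu(B_1(y)\cap\Sigma)\le D$ and $\|{\bf H}\|_{L^{d+\gamma}}\le D$ are exactly what is needed to keep the constants uniform independent of the ambient position and of the metric $g$.

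First I would apply Kato's inequality $|\nabla|{\bf T}|_g|\le|\nabla {\bf T}|_g$, valid pointwise on $\{|{\bf T}|_g>0\}$, which reduces the desired bound to the scalar claim: for every nonnegative integrable $f$ on $\Sigma$,
\begin{align*}
\sup_{\Sigma} f\le C(d,p,\gamma,D)\bigl(\|\nabla f\|_{L^p(d\mu)}+\|f\|_{L^p(d\mu)}\bigr).
\end{align*}
The main tool is the Michael--Simon Sobolev inequality on $\Sigma$,
\begin{align*}
\Bigl(\int_\Sigma h^{d/(d-1)}d\mu\Bigr)^{(d-1)/d}\le C_{0}\int_\Sigma\bigl(|\nabla h|+h|{\bf H}|\bigr)d\mu,
\end{align*}
which holds with a universal constant $C_0$ independent of the metric. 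Inserting $h=f^{q}$ for $q\ge 1$ and applying H\"older with the conjugate pair $(p,p/(p-1))$ to the gradient term and the pair $(d+\gamma,(d+\gamma)/(d+\gamma-1))$ to the curvature term yields the recursion
\begin{align*}
\|f\|_{L^{qd/(d-1)}}^{q}\le C_{0}q\,\|f\|_{L^{(q-1)p/(p-1)}}^{q-1}\|\nabla f\|_{L^p}+C_{0}D\,\|f\|_{L^{q(d+\gamma)/(d+\gamma-1)}}^{q}.
\end{align*}

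Iterating this recursion along a geometric sequence of exponents $q_k$, in the classical Moser/De Giorgi manner, produces the endpoint estimate $\sup_\Sigma f\le C(\|\nabla f\|_{L^p}+\|f\|_{L^p})$ with $C=C(d,p,\gamma,D)$. The uniform local volume bound is used twice: to justify truncation/covering arguments with constants depending only on $D$, and to ensure the starting $L^{p}$ seed of the iteration is finite on unit-sized pieces. The main obstacle is the mean-curvature term in Michael--Simon, and here the strict positivity of $\gamma$ is essential: the exponent $(d+\gamma)/(d+\gamma-1)$ lies strictly below $d/(d-1)$, so the curvature contribution can be absorbed back by interpolation between $L^{q(d+\gamma)/(d+\gamma-1)}$ and $L^{qd/(d-1)}$ after each step, and the product of iteration constants converges; with only ${\bf H}\in L^{d}$ this absorption fails. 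The hypothesis $p>d$ plays the analogous Morrey-type role on the gradient side, guaranteeing that the geometric series of H\"older constants arising from the recursion terminates in a finite $L^\infty$ bound.
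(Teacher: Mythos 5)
Your argument is, in substance, the proof of the result the paper relies on: the paper does not actually prove Lemma \ref{Maz} but cites Mantegazza (pp.~157--159), whose argument is precisely the reduction to scalars via Kato's inequality followed by the Michael--Simon Sobolev inequality and a Moser iteration; the paper's only contribution here is the remark that compactness of $\Sigma$ can be replaced by the stated uniform bounds. Your recursion is the correct one, and $p>d$ enters exactly where you say: choosing $q$ so that $(q-1)p/(p-1)$ equals the exponent controlled at the previous stage, the new exponent $qd/(d-1)$ exceeds the old one by the asymptotic factor $\frac{d}{d-1}\cdot\frac{p-1}{p}$, which is $>1$ precisely when $p>d$.

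One step needs to be made precise, because $\Sigma=u(\Bbb R^d)$ has infinite volume and lower $L^r$ norms do not control higher ones. ``Absorption by interpolation between $L^{q\sigma}$ and $L^{q\chi}$'' (with $\sigma=(d+\gamma)/(d+\gamma-1)<\chi=d/(d-1)$) is not meaningful as written: to absorb the curvature term you must interpolate $\|f\|_{L^{q\sigma}}$ between $L^{q\chi}$ and a third, \emph{lower} exponent already under control. The natural anchor is the fixed seed $L^{p}$, which works since $q_k\ge p$ for all $k$ and hence $p<q_k\sigma<q_k\chi$; anchoring at the running exponent $(q_k-1)p/(p-1)$ instead would fail whenever $p\le d+\gamma$, because that exponent eventually overtakes $q_k\sigma$. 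With the $L^p$ anchor the interpolation weight $\theta_k$ tends to $1$ and the Young constant grows like $(2C_0D)^{cq_k}$, but this is tamed by the $q_k$-th root taken at each stage, so the infinite product of iteration constants still converges and $\sup_\Sigma f=\lim_{r\to\infty}\|f\|_{L^r}$ is finite. Two smaller points: Michael--Simon is stated for compactly supported test functions, so the truncation you allude to is genuinely needed (justified by finiteness of the right-hand side of the lemma), and in this route the hypothesis $\mu(B_1(y)\cap\Sigma)\le D$ is not actually used --- which is harmless, since an unused hypothesis only makes the statement weaker.
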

\begin{proof}
This lemma is essentially due to [Page 157-159, Mantegazza \cite{Maz}]. We point out that the original result of  \cite{Maz} was stated for compact manifolds $(\mathcal{M},h)$ with finite volume.
But carefully checking \cite{Maz}'s proof, we see her argument indeed only requires the uniform bounds of
$\mu(B_1(y)\bigcap \Sigma)$  for any $y\in \Bbb R^{d+2}$ and $\|{\bf H}\|_{L^{d+\gamma}}$.
\end{proof}

\subsection{Local Cauchy problem }

Let's move to local well-posedness.
Our main result for this section is as follows. 
\begin{theorem}\label{2K}
\begin{itemize}
  \item Given $d\ge 2$, let $k_0$ be the smallest integer such that $k_0>\frac{d}{2}+2$, i.e. $k_0=[\frac{d}{2}]+3$. Assume that $u^0_1, u^0_2:\Bbb R^d\to \Bbb R$ are
functions belonging to  $H^{\sigma}$ for any $\sigma\ge 0$.  There exists a small constant $T>0$ depending only on
$$\|u^0_1\|_{H^{k_0}}+\|u^0_2\|_{H^{k_0}},$$
such that  there exists a unique smooth local solution $F$ to SMCF  of the form
\begin{align}\label{tFgm}
F(t,x)=(x_1,...,x_d,u_1(t,x),u_2(t,x)),\mbox{  }(u_1,u_2)\upharpoonright_{t=0}=(u^0_1,u^0_2),
\end{align}
so that $u_1,u_2\in C([-T,T];H^{\sigma})$ for any $\sigma\ge 0$.
  \item Given $d\ge 2$, let $s>\frac{d+5}{2}$. Then there exists $\epsilon>0$  sufficiently small
such that for any initial data $u_0=(u^0_1,u^0_2)$  with
$$\|u_0\|_{H^{s}}\le \epsilon, $$
the SMCF equation for $F$ of the form (\ref{tFgm}) is locally well-posed in $H^s(\Bbb R^d)$ on the time interval $[-1,1]$.
\end{itemize}
\end{theorem}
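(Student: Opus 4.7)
The two parts of Theorem \ref{2K} share a common skeleton: construct approximate solutions, derive uniform bounds via the geometric energy estimates of Lemma \ref{e}, and pass to the limit; the difference between the two parts is the regularity level and, in Part 2, the use of smallness. For Part 1 I would regularize the flow by adding a dissipative mean-curvature-flow term, $\partial_t F_\epsilon = J(F_\epsilon)\mathbf{H}(F_\epsilon)+\epsilon\,\mathbf{H}(F_\epsilon)$, which in graph form is a uniformly parabolic quasilinear system in $(u_1^\epsilon,u_2^\epsilon)$ locally solvable by classical parabolic theory. To derive $\epsilon$-uniform a priori bounds, first control $\max_\Sigma|\mathbf{A}|_g$ by a $W^{1,d+1}$-norm of $\mathbf{A}$ using Lemma \ref{Maz}, and then by $\|\mathbf{A}\|_{H^{k_0-2,2}}$ via Hamilton's interpolation inequality (exactly as in the proof of Lemma \ref{hbnjkmkk}); at $t=0$ this last quantity is bounded by $\|u_0\|_{H^{k_0}}$ through (\ref{e2s}). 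Substituting into (\ref{e4}) and applying Gr\"onwall closes a bootstrap on an interval $[-T,T]$ with $T$ depending only on $\|u_0\|_{H^{k_0}}$, and the higher-order bounds come by induction on $l$ in (\ref{e4}). Lemma \ref{hbnjkmkk} then transfers the tensorial bounds on $\mathbf{A}$ back to bounds on $\|D^2 u^\epsilon\|_{W^{l,2}}$ for every $l$, and Aubin--Lions compactness provides a smooth limit solving (\ref{yu}). Uniqueness at the smooth level follows from an $L^2$-type energy estimate applied to the difference $\phi_1-\phi_2$ in the quasilinear form (\ref{H3M}); the real symmetric principal symbol $g^{ij}\partial_i\partial_j$ avoids derivative loss at $L^2$-level because $\mathrm{i}\partial_t$ is skew-adjoint and the coefficients are smooth.

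For Part 2 I would run a Bona--Smith regularization. Mollify $u_0\in H^s$ to a family $u_0^\delta\in H^\infty$ with $\|u_0^\delta\|_{H^s}\le 2\epsilon$ and higher-regularity norms growing polynomially in $\delta^{-1}$; Part 1 then produces smooth solutions $\phi^\delta$ on initial short time intervals. The crucial step is a $\delta$-uniform $H^s$ estimate on $[-1,1]$: differentiating (\ref{H3M}) and commuting $\langle D\rangle^s$ through the equation, the dangerous order-$(s+1)$ commutators carry an $O(|\partial_x\phi|^2)$ prefactor because $g^{ij}-\delta_{ij}$ and $(\Lambda\sqrt{1+|\partial_x u_1|^2})^{-1}-1$ are both $O(|\partial_x\phi|^2)$, and after symmetrization by integration by parts (using the symmetry of $g^{ij}$) they absorb into $\|\partial_x\phi\|_{L^\infty}^2\|\phi\|_{H^s}^2\lesssim\epsilon^2\|\phi\|_{H^s}^2$ via Sobolev embedding ($s>d/2+1$). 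A standard bootstrap then extends $\phi^\delta$ to $[-1,1]$ with $\|\phi^\delta\|_{L^\infty_tH^s_x}\lesssim\epsilon$. A derivative-loss-free lower-order energy estimate on $\phi^{\delta_1}-\phi^{\delta_2}$, interpolated against the uniform higher-regularity bounds on each $\phi^\delta$ (of size $\delta^{-\eta}$), shows $\{\phi^\delta\}$ is Cauchy in $C([-1,1];H^s)$. Passing to the limit gives existence, and the same comparison argument yields uniqueness and continuous dependence in $H^s$.

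The main obstacle is closing the $H^s$ energy estimate in Part 2 without derivative loss while keeping only $O(\epsilon^2)$ growth; this is what distinguishes the geometric setting from general quasilinear Schr\"odinger equations for which non-trapping assumptions are usually required. The enabling structural fact, already visible in Section 2, is that the deviation of the leading symbol in (\ref{H3M}) from the flat Laplacian is quadratic in $|\partial_x\phi|$, so all top-order commutator terms in $\frac{d}{dt}\|\phi\|_{H^s}^2$ can be rewritten, using the real symmetry of $g^{ij}$, as expressions bounded by $\|\partial_x\phi\|_{L^\infty}^2\|\phi\|_{H^s}^2$, never requiring more than $s$ derivatives on $\phi$. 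Verifying this cancellation carefully at each order, and checking that the parabolic regularization in Part 1 preserves the dissipative form of (\ref{e4}), are the technical steps I expect to consume most of the work.
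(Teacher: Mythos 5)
Your Part 1 follows essentially the paper's own route: the paper also regularizes by $\partial_tF^{\lambda}=J(F^{\lambda})\mathbf{H}(F^{\lambda})+\lambda\mathbf{H}(F^{\lambda})$, closes $\lambda$-uniform bounds through (\ref{e4}), Hamilton's interpolation, Lemma \ref{Maz} and Lemma \ref{hbnjkmkk}, and passes to the limit (it does this in two stages, first getting a lifespan depending on a high norm $\|u_0\|_{H^{K}}$ and then upgrading to $\|u_0\|_{H^{k_0}}$ via a blow-up criterion in terms of $\|\mathbf{A}\|_{H^{0,\infty}}$; your single-stage version is a reorganization of the same estimates). One caveat: your uniqueness argument does not close as stated. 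For the difference $w=\phi_1-\phi_2$ of two solutions of (\ref{H3M}) one gets $i\partial_tw=a_1^{ij}\partial^2_{ij}w+(a_1^{ij}-a_2^{ij})\partial^2_{ij}\phi_2$ with $a_1-a_2=O(|\partial_xw|)$, so the $L^2$ pairing produces $\|w\|_{L^2}\|\partial_xw\|_{L^2}$ --- a one-derivative loss that skew-adjointness of $\mathrm{i}\partial_t$ does not remove. The paper instead invokes Song's uniqueness theorem \cite{Song2}, which is a separate, nontrivial argument.

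The genuine gap is in Part 2. You claim that after symmetrization the top-order commutators in $\frac{d}{dt}\|\phi\|^2_{H^s}$ are bounded by $\|\partial_x\phi\|^2_{L^\infty}\|\phi\|^2_{H^s}$. They are not. Writing the leading operator in divergence form, $a^{ij}\partial^2_{ij}=\partial_i(a^{ij}\partial_j\cdot)-(\partial_ia^{ij})\partial_j$, the symmetric part indeed contributes nothing to $\mathrm{Im}\int\overline{\partial^\alpha\phi}\,a^{ij}\partial^2_{ij}\partial^\alpha\phi$, but the residual first-order term together with the top commutator $[\partial^\alpha,a^{ij}]\partial^2_{ij}\phi$ leaves expressions of the form $\mathrm{Im}\int\overline{\partial^\alpha\phi}\,b^j\partial_j\partial^\alpha\phi$ with \emph{real} $b^j=O(\partial_x\phi\cdot\partial_x^2\phi)$. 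Since $b$ is real this is a momentum-density term, $\int b^j\,\mathrm{Im}(\overline{\Phi}\partial_j\Phi)$ with $\Phi=\partial^\alpha\phi$, not a total derivative, and it is only bounded by $\epsilon^2\|\phi\|_{H^s}\|\phi\|_{H^{s+1}}$: one full derivative is lost, and smallness shrinks the constant but not the loss. (The constant-coefficient model $i\partial_tu+\Delta u+b\cdot\nabla u=0$ with real constant $b$ already exhibits frequency growth $e^{t|b||\xi|}$, so no algebraic symmetrization can rescue a purely energy-based estimate.) This is exactly the classical obstruction for quasilinear Schr\"odinger equations whose coefficients depend on $\nabla\phi$, and it is why the paper does not attempt an energy proof of Part 2: it quotes Marzuola--Metcalfe--Tataru [Theorem 1.1, \cite{MMT1}], whose proof recovers the lost derivative through local smoothing estimates. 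Note also that the loss-free estimate used in Part 1, inequality (\ref{e4}), is an estimate for covariant derivatives of the second fundamental form, where the cancellation is geometric (Song--Sun); it is not an $H^s$ estimate for $\phi$ and does not transfer to the flat energy functional you propose.
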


We begin with the small data case. 
\begin{lemma}\label{HjK}
 Given $d\ge 2$, let $s>\frac{d+5}{2}$. Then there exists $\epsilon>0$  sufficiently small
such that for any initial data $u_0=(u^0_1,u^0_2)$  with
$$\|u_0\|_{H^{s}}\le \epsilon, $$
the SMCF equation for $F$ of the form (\ref{tFgm}) is locally well-posed in $H^s(\Bbb R^d)$ on the time interval $[-1,1]$.
\end{lemma}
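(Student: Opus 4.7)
The plan is to combine the geometric energy inequalities of Lemma \ref{e} with a bootstrap argument and a Bona--Smith type stability estimate. In the graph parametrization (\ref{tFgm}), SMCF is equivalent to the scalar quasilinear Schr\"odinger equation (\ref{H3M}),
\begin{align*}
\mathrm{i}\partial_t\phi = a^{ij}(\partial_x u)\,\partial^2_{ij}\phi,\qquad a^{ij}:=\frac{g^{ij}}{\Lambda\sqrt{1+|\partial_x u_1|^2}},\qquad \phi = u_1+\mathrm{i}u_2,
\end{align*}
whose coefficient matrix is real, symmetric, and smoothly close to $\delta^{ij}$ when $|\partial_x u|$ is small. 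For existence I would first regularize the data, say $u_0^{(n)}=P_{\le n}u_0\in H^\infty$ (which still satisfies $\|u_0^{(n)}\|_{H^s}\le\epsilon$), and produce smooth local solutions $u^{(n)}$ either via parabolic regularization of (\ref{H3M}) by adding $-\delta\Delta^2\phi$ (the resulting symbol has real part $\le-\delta|\xi|^4$, allowing a standard semigroup treatment, followed by $\delta\to 0^+$) or via a frozen-coefficient Picard iteration; in either case the difficulty is shifted onto a uniform-in-$n$ a priori estimate on a fixed interval $[-1,1]$.

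The a priori estimate runs by bootstrap. Set $N=\lceil s\rceil$ and assume $\|u(t)\|_{H^N}\le 2\epsilon$ on some $[0,T]\subset[0,1]$. Since $s>\tfrac{d+5}{2}>\tfrac{d}{2}+2$, Sobolev embedding yields $\|Du\|_{L^\infty}+\|D^2u\|_{L^\infty}\lesssim\epsilon$, so (\ref{e1}) gives $|{\bf A}|_g\lesssim\epsilon$. Integrating the geometric identity (\ref{e4}) leads to
\begin{align*}
\|\nabla^l{\bf A}\|_{L^2_g}^2(t)\le e^{C\epsilon^2 t}\|\nabla^l{\bf A}\|_{L^2_g}^2(0)\qquad (0\le l\le N-2).
\end{align*}
Using (\ref{e2}) to bound the right-hand side in terms of $\|D^2u_0\|_{W^{l,2}}$, and (\ref{e3}) at time $t$ (whose quadratic-or-higher tail in $\|D^2u\|_{W^{l,2}}$ is absorbed into the left by smallness), converts this into $\|D^2u(t)\|_{W^{N-2,2}}\lesssim\epsilon$. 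The missing low-frequency piece is controlled by integrating (\ref{smk3}) directly, $\|u(t)\|_{L^2}\le\|u_0\|_{L^2}+C\int_0^t\|\Delta u\|_{L^2}\,ds\lesssim\epsilon$. Combining, $\|u(t)\|_{H^N}\lesssim\epsilon$, which after shrinking $\epsilon$ closes the bootstrap on $[0,1]$; time reversal handles $[-1,0]$.

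For uniqueness and continuous dependence, I would form the difference $\psi=\phi^{(1)}-\phi^{(2)}$ of two solutions, which satisfies
\begin{align*}
\mathrm{i}\partial_t\psi = a^{ij}(\partial u^{(1)})\,\partial^2_{ij}\psi + \bigl[a^{ij}(\partial u^{(1)})-a^{ij}(\partial u^{(2)})\bigr]\partial^2_{ij}\phi^{(2)}.
\end{align*}
Because $a^{ij}$ is real and symmetric, an $H^k$ energy identity with $k\in(\tfrac{d}{2}+1,s-1)$ integrates by parts to kill the top-order principal contribution, leaving only subprincipal commutators bounded, via the $H^s$ a priori estimate, by $C\epsilon\|\psi\|_{H^k}^2$; the forcing is controlled by $\|\partial\psi\|_{L^\infty}\|\partial^2\phi^{(2)}\|_{H^k}+\|\partial^2\phi^{(2)}\|_{L^\infty}\|\partial\psi\|_{H^k}\lesssim\epsilon\|\psi\|_{H^k}$. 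Gr\"onwall then gives $\|\psi(t)\|_{H^k}\le e^{C\epsilon}\|u_0^{(1)}-u_0^{(2)}\|_{H^k}$, hence uniqueness, and a Bona--Smith argument upgrades this to continuous dependence in $H^s$. The main obstacle I anticipate is the asymmetry of (\ref{e3}) compared with the cleaner $d=2$ identity (\ref{e3p}): the quadratic-or-higher self-interaction $\|D^2u\|_{W^{l,2}}^k$ on its right-hand side can only be absorbed after the smallness $\|D^2u\|_{L^\infty}\lesssim\epsilon$ is already in force, which is the structural reason the Sobolev threshold must be taken above $\tfrac{d+5}{2}$ rather than the naive $\tfrac{d}{2}+2$.
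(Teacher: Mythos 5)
The paper's own proof of this lemma is a one-line citation: equation (\ref{H3M}) exhibits SMCF in graph form as a quasilinear Schr\"odinger equation with coefficients $a^{ij}=\delta^{ij}+O(|\partial_x u|^2)$, i.e.\ a \emph{cubic} quasilinear Schr\"odinger problem, and the small-data local well-posedness in $H^s$, $s>\frac{d+5}{2}$, on $[-1,1]$ is exactly Theorem 1.1 of Marzuola--Metcalfe--Tataru \cite{MMT1}. Your proposal replaces this by a direct energy/Bona--Smith scheme, and while your a priori estimate (bootstrapping $\|u\|_{H^N}$ through (\ref{e1}), (\ref{e4}), (\ref{e2}), (\ref{e3})) is essentially the argument the paper uses elsewhere (Lemma \ref{FFF} and Step 3 of Section 4), the proposal has two genuine gaps at precisely the points where the quasilinear Schr\"odinger difficulty lives.

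First, the uniqueness/continuous-dependence step is wrong as stated. After integrating by parts in $\mathrm{Im}\int\overline{\partial^k\psi}\,a^{ij}\partial^2_{ij}\partial^k\psi\,dx$, the symmetric top-order term cancels, but what remains is a first-order term of the form $\mathrm{Im}\int \partial_i a^{ij}\,\overline{\partial^k\psi}\,\partial_j\partial^k\psi\,dx$ with \emph{real} coefficient $\partial_i a^{ij}$. For Schr\"odinger-type operators such a term is not bounded by $C\epsilon\|\psi\|_{H^k}^2$; it costs a full extra derivative of $\psi$, the resulting Gr\"onwall ladder never closes at any fixed regularity, and this derivative loss is the defining obstruction for quasilinear Schr\"odinger equations (it is why \cite{MMT1} works in local-smoothing-based spaces $l^1X^s$, and why the paper's large-data uniqueness is delegated to Song's separate geometric argument \cite{Song2} rather than to an $L^2$ difference estimate). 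Second, your existence scheme undercuts your own a priori estimate: the identity (\ref{e4}) is a geometric consequence of the SMCF evolution, so it is not available for solutions of (\ref{H3M}) regularized by $-\delta\Delta^2\phi$, nor does a frozen-coefficient Picard iteration converge (again because of the derivative loss). The paper's regularization in Lemma \ref{FFF} is the specific flow $\partial_tF=J\mathbf{H}+\lambda\mathbf{H}$ precisely because (\ref{e4}) survives it. A minor further point: the threshold $s>\frac{d+5}{2}$ is the \cite{MMT1} threshold for cubic quasilinear Schr\"odinger equations, not an artifact of absorbing the quadratic tail of (\ref{e3}); the absorption you describe already works for $s>\frac{d}{2}+2$.
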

\begin{proof}
This follows directly by [Theorem 1.1, \cite{MMT1}] and (\ref{H3M}).
\end{proof}

Now, let us prove the local well-posedness theory for arbitrary large data stated in Theorem \ref{2K}. The whole proof is divided into two steps. In Step 1, we prove the existence of solution to SMCF in $t\in[-T,T]$ with $T>0$ depending on
$$\|u^0_1\|_{H^{K}}+\|u^0_2\|_{H^{K}},$$
where $K$ is some integer depending only on $d$ and larger than $k_0$.
In Step 2, we prove the existence of solution to SMCF in $t\in[-T,T]$ with $T>0$ depending only on
$$\|u^0_1\|_{H^{k_0}}+\|u^0_2\|_{H^{k_0}}.$$
\begin{lemma}\label{FFF}
Given $d\ge 2$, let $k_0$ be the smallest integer such that $k_0>\frac{d}{2}+2$, i.e. $k_0=[\frac{d}{2}]+3$.  Let  $(u^0_1,u^0_2)\in H^{\infty}$. Then there exists a  constant $T>0$ depending only on
$$\|u^0_1\|_{H^{k_0}}+\|u^0_2\|_{H^{k_0}},$$
such that  there exists a unique smooth local solution $F$ to SMCF  of the form
\begin{align*}
F(t,x)=(x_1,...,x_d,u_1(t,x),u_2(t,x)),\mbox{  }(u_1,u_2)\upharpoonright_{t=0}=(u^0_1,u^0_2),
\end{align*}
so that $u_1,u_2\in C([-T,T];H^{\sigma})$ for any $\sigma>0$.
\end{lemma}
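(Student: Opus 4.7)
The plan is to construct a smooth SMCF solution by a viscosity regularization of the geometric flow and close a bootstrap whose sole input is the lower-regularity norm $\|u_0\|_{H^{k_0}}$. The decisive structural fact is that the geometric energy inequality (\ref{e4}) controls the growth of $\|{\bf A}\|_{H^{l,2}}$ only through $\max_{\Sigma}|{\bf A}|_g^2$, and since $k_0-2>d/2$ the pointwise estimate (\ref{e1}) combined with Sobolev embedding on $\mathbb{R}^d$ gives
\begin{equation*}
\max_{\Sigma}|{\bf A}|_g\;\le\;\|D^2u\|_{L^\infty}\;\lesssim\;\|u\|_{H^{k_0}}.
\end{equation*}
Thus Gr\"onwall applied to (\ref{e4}) produces a priori bounds on $\|{\bf A}\|_{H^{l,2}}$ that depend on time only through the low norm $\|u\|_{H^{k_0}}$, bypassing the non-trapping hypothesis usually required for generic quasilinear Schr\"odinger equations with large data.

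\textbf{Regularization, uniform estimates, and bootstrap.} For $\varepsilon>0$, I would consider the parabolic regularization
\begin{equation*}
\partial_t F_\varepsilon\;=\;J(F_\varepsilon)\mathbf{H}(F_\varepsilon)+\varepsilon\mathbf{H}(F_\varepsilon),
\end{equation*}
which in graph coordinates is a strictly parabolic quasilinear system admitting smooth short-time solutions by standard theory. The derivations in Lemma \ref{e} extend to this flow, the dissipative $\varepsilon\mathbf{H}$ term contributing only a favorable sign in the energy identities, so (\ref{e4}) holds uniformly in $\varepsilon$. A bootstrap is then run on the interval where $\|u^\varepsilon\|_{H^{k_0}}\le 2\|u_0\|_{H^{k_0}}$: the displayed embedding furnishes uniform pointwise bounds $\|Du_\varepsilon\|_{L^\infty}\le\alpha$ and $\|{\bf A}_\varepsilon\|_{L^\infty}\le\beta$; Gr\"onwall then gives $\|{\bf A}_\varepsilon\|_{H^{l,2}}\le C(l,\alpha,\beta,T)\|{\bf A}(0)\|_{H^{l,2}}$ for every $l\ge 0$; and Lemma \ref{hbnjkmkk} converts this back to $\|D^2u_\varepsilon\|_{W^{l,2}}\lesssim_{\alpha,\beta}P_l(\|u_0\|_{H^{M_l}})$ for suitable $M_l$. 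Choosing $T=T(\|u_0\|_{H^{k_0}})>0$ sufficiently small closes the bootstrap at the $H^{k_0}$ level.

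\textbf{Passage to the limit, uniqueness, and the main obstacle.} With uniform-in-$\varepsilon$ $H^\sigma$ bounds on $[-T,T]$, an Aubin--Lions-type compactness argument extracts a smooth limit $F$ solving the unregularized SMCF with $u_1,u_2\in C([-T,T];H^\sigma)$ for every $\sigma\ge 0$; an $L^2$-type energy estimate on the difference of two solutions to (\ref{H3M}) --- using the smooth dependence of $g^{ij},\Lambda,(1+|\partial_xu_1|^2)^{-1/2}$ on $\phi$ --- yields uniqueness. The main obstacle is the failure of (\ref{e3p}) in $d\ge 3$ highlighted after Lemma \ref{e}: the geometric norm $\|{\bf A}\|_{H^{l,2}}$ cannot linearly dominate $\|D^2u\|_{W^{l,2}}$, and (\ref{e3}) is useful only in the small-data regime because its right-hand side is quadratic in $\|D^2u\|_{W^{l,2}}$. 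Lemma \ref{hbnjkmkk} is precisely designed to bypass this by exploiting the additional pointwise inputs $\|Du\|_{L^\infty}\le\alpha$ and $\|{\bf A}\|_{L^\infty}\le\beta$; maintaining these uniform pointwise controls throughout the bootstrap --- via Sobolev embedding at the $H^{k_0}$ level, and, where local geometric control is needed, via Lemma \ref{Maz} applied to the mean curvature vector --- is the delicate technical point of the construction.
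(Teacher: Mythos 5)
Your overall strategy---the parabolic regularization $\partial_t F_\varepsilon = J(F_\varepsilon)\mathbf{H}(F_\varepsilon)+\varepsilon\mathbf{H}(F_\varepsilon)$, uniform geometric energy estimates via (\ref{e4}), recovery of Sobolev bounds on $u$ through Lemma \ref{hbnjkmkk}, and a compactness passage to the limit---is the same as the paper's. But the bootstrap as you set it up does not close. First, Lemma \ref{hbnjkmkk} returns $\|D^2u_\varepsilon\|_{W^{k_0-2,2}}\lesssim_{\alpha,\beta}P_K(\|{\bf A}_\varepsilon\|_{H^{K,2}})$ with $K$ an integer possibly much larger than $k_0-2$; the Gr\"onwall bound on $\|{\bf A}_\varepsilon\|_{H^{K,2}}$ is in terms of its initial value, hence of high Sobolev norms of $u_0$ (roughly $\|u_0\|_{H^{K+2}}$), so the existence time you extract this way depends on those high norms and not on $\|u_0\|_{H^{k_0}}$. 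Second, even granting the derivative count, the recovered bound $P_K(\cdots)$ is a fixed polynomial expression in the data which does not shrink to $\|u_0\|_{H^{k_0}}$ as $T\to 0$; a bootstrap hypothesis of the form $\|u^\varepsilon\|_{H^{k_0}}\le 2\|u_0\|_{H^{k_0}}$ can only be closed by an output of the form $\|u_0\|_{H^{k_0}}+T\cdot C$, which your chain of inequalities never produces. The paper avoids this by taking the bootstrap quantity to be $\|u^\lambda\|_{H^{k_1}}\le C_0\|u_0\|_{H^{K}}$ with $C_0$ chosen \emph{large} so as to absorb all the polynomial losses from (\ref{e2}) and Lemma \ref{hbnjkmkk}, and by controlling the lower norm $H^{k_1-2}$ directly from the Duhamel formula for the quasilinear Schr\"odinger formulation (\ref{mss}), which does carry an explicit factor of $T_\lambda$.

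Obtaining a lifespan depending only on $\|u_0\|_{H^{k_0}}$ is not a refinement of this bootstrap but a separate second stage (Lemma \ref{11HjK} in the paper): one first proves existence for a time depending on $\|u_0\|_{H^{K}}$, then establishes the continuation criterion that the smooth solution persists as long as $\|{\bf A}\|_{H^{0,\infty}}$ remains finite (Corollary \ref{GBnM}), and finally controls $\|{\bf A}\|_{H^{0,\infty}}$ on a time interval depending only on $\|u_0\|_{H^{k_0}}$ by combining the $L^p$ evolution inequality (\ref{Maz1}), Hamilton's interpolation inequality, the measure bound coming from $\partial_t\mu=0$, and Mantegazza's $L^\infty$ tensor estimate, Lemma \ref{Maz}. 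You gesture at Lemma \ref{Maz} in your last sentence, but without the blow-up criterion and this chain of estimates the claimed dependence of $T$ on $\|u_0\|_{H^{k_0}}$ alone is unsupported; as written, your argument at best reproduces the first stage with a lifespan governed by higher norms of the data.
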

\begin{proof}
Let $\lambda\in (0,1)$. Consider the flow
\begin{align}\label{perturb}
\partial_tF^{\lambda}=J(F^{\lambda}){\bf H}(F^{\lambda})+\lambda {\bf H}(F^{\lambda}), \mbox{ }F^{\lambda}(0,x)=(x_1,...,x_d,u^0_1(x),u^0_2(x)).
\end{align}
The local well-posedness of (\ref{perturb}) in $H^s$ with $s>\frac{d}{2}+2$ follows by [Prop. 8.1, \cite{Tay}].  The strong parabolic property of  (\ref{perturb}) can be easily seen from
\begin{align*}
\left\{ \begin{gathered}
  {\partial _t}u_1^\lambda  = \lambda (\frac{{{{\left| {{\partial _x}u_1^\lambda \cdot{\partial _x}u_2^\lambda } \right|}^2}}}
{{{\Lambda ^2}{{(1 + |{\partial _x}u_1^\lambda {|^2})}^2}}} + \frac{1}
{{1 + |{\partial _x}u_1^\lambda {|^2}}}){g^{ij}}\partial _{{x_i}{x_j}}^2u_1^\lambda {\text{ }} - \lambda \frac{{{\partial _x}u_1^\lambda \cdot{\partial _x}u_2^\lambda }}
{{{\Lambda ^2}(1 + |{\partial _x}u_1^\lambda {|^2})}}{g^{ij}}\partial _{{x_i}{x_j}}^2u_2^\lambda  \hfill \\
  \mbox{ }\mbox{ }\mbox{ }\mbox{ } - \frac{1}
{{\Lambda \sqrt {1 + |{\partial _x}u_1^\lambda {|^2}} }}{g^{ij}}\partial _{{x_i}{x_j}}^2u_2^\lambda  \hfill \\
  {\partial _t}u_2^\lambda  = \lambda \frac{1}
{{{\Lambda ^2}}}{g^{ij}}\partial _{{x_i}{x_j}}^2u_2^\lambda {\text{ }} - \lambda \frac{{{\partial _x}u_1^\lambda \cdot{\partial _x}u_2^\lambda }}
{{{\Lambda ^2}(1 + |{\partial _x}u_1^\lambda {|^2})}}{g^{ij}}\partial _{{x_i}{x_j}}^2u_1^\lambda  + \frac{1}
{{\Lambda \sqrt {1 + |{\partial _x}u_1^\lambda {|^2}} }}{g^{ij}}\partial _{{x_i}{x_j}}^2u_1^\lambda.  \hfill \\
\end{gathered}  \right.
\end{align*}
We also recall that the solution of  (\ref{perturb}) belongs to $C([0,T'];H^s)$ as long as
\begin{align}\label{JM1}
\|u^{\lambda}\|_{L^{\infty}([0,T'];C^{1+\delta_1})}<\infty \mbox{ }{\rm for} \mbox{ }{\rm some}\mbox{ }  \delta_1>0.
\end{align}
Direct calculations show (\ref{e4}) also holds for $\mathbf{A}^{\lambda}$, the second fundamental form associated with $F^{\lambda}$.
Define $T_{\lambda}\in [0,1]$ to be the maximal time such that
\begin{align}\label{Fvb}
\|u^{\lambda}\|_{L^{\infty}([0,T_{\lambda}];H^{k_0})}\le C_0\varepsilon\ll 1.
\end{align}
 Let $k_1=[\frac{d}{2}]+4$.
Define $T_{\lambda}\in [0,1]$ to be the maximal time such that
\begin{align}\label{nnFvb}
\|u^{\lambda}\|_{C([0,T_{\lambda}];H^{k_1})}\le C_0\|u_0\|_{H^{K}},
\end{align}
where $K\ge k_1$ is an integer depending only on $d$ to be determined later, and $C_0>1$ is a constant to be chosen later. By the blow-up criterion and Sobolev embedding, $u^{\lambda}$ is smooth for $t\in[0,T_{\lambda}]$.
We shall use a continuity argument to show $T_{\lambda}\ge T$ for some $T>0$ depending only on
$\|u_0\|_{H^{K}}$.  shows  $u^{\lambda}$ is smooth for $t\in[0,T_{\lambda}]$.

First,  letting $\phi^{\lambda}=u^{\lambda}_1+ \mathrm{{i}}u^{\lambda}_2$, by
Duhamel principle, we infer from   (\ref{nnFvb}) and the fact  $H^{k_1-2}$ is an algebra   that
\begin{align}\label{nnFvbzz}
\|u^{\lambda}\|_{C([0,T_{\lambda}];H^{k_1-2})}\le \|u_0\|_{H^{k_1}_x}+C_1T_{\lambda}[C_0\|u_0\|_{H^{K}_x}+ C^{n_0}_0\|u_0\|^{n_0}_{H^{K}_x}],
\end{align}
where $n_0\in\Bbb Z_+$ and  $C_1>0$ depend only on $d$, see Remark \ref{12}.
Second, note that  (\ref{e1})  with (\ref{nnFvb})  implies
\begin{align}\label{IKJ}
\|{\bf A}^{\lambda}\|_{C([0,T_{\lambda}];H^{0,\infty})}\le C_2  C_0\|u_0\|_{H^{K}},
\end{align}
for some universal constant $C_2>0$.
Then for any $k\in\Bbb Z_+$, (\ref{e4}), (\ref{e2}), and Gronwall inequality give
\begin{align}\label{nnFvbzpz}
\|{\bf A}^{\lambda}\|_{C([0,T_{\lambda}];H^{k,2})}\le (C_3\|u_0\|_{H^{k}}+C_{3}\|u_0\|^{n_1}_{H^{k}})e^{C_2T_{\lambda}C^2_0\|u_0\|^2_{H^{K}}},
\end{align}
where $n_1\in\Bbb Z_+, C_3>0$ are independent of $C_0$, and depend  only on $d,k$. Hence, Lemma \ref{hbnjkmkk}, (\ref{nnFvbzpz}) and (\ref{nnFvb}) yield that there exist some sufficiently large $K\in\Bbb Z_+$  depending only on $d$ and a polynomial $P_{K}$ such that
\begin{align}
&\|D^2u^{\lambda}\|_{C([0,T_{\lambda}];H^{k_1-2})}\le   G(\|Du^{\lambda}\|_{C([0,T_{\lambda}];L^{\infty}_x)}) P_{K}(\|{\bf A}^{\lambda}\|_{C([0,T_{\lambda}];H^{K,2})})\nonumber\\
 &\le G(\|Du^{\lambda}\|_{C([0,T_{\lambda}];L^{\infty}_x)})(C_3\|u_0\|_{H^{K}}+C_{3}\|u_0\|^{n_2}_{H^{K}})e^{C_2T_{\lambda}C^2_0\|u_0\|^2_{H^{K}}},\label{nnFvbzzz}
\end{align}
where $n_2\in\Bbb Z_+$ and $C_3>0$ depend only on $k_0,d$, and $G:\Bbb R^+\to \Bbb R^{+}$ is of the form
\begin{align*}
G(y)=C_4[(1+y^2)^{\frac{k_1+3}{2}}+Q_{k_1-2}(y)],
\end{align*}
for some constant  $C_4>0$ depending only on $d$. Let $n_3$ denote the top order of $Q_{k_0-2}$.
Since $k_0>\frac{d}{2}+3$, (\ref{nnFvbzz}) and Sobolev embedding show
\begin{align*}
\|Du^{\lambda}\|_{C([0,T_{\lambda}];L^{\infty}_x)}\le C_5\|u_0\|_{H^{K}}+C_{5}T_{\lambda}C^{n_0}_0\|u_0\|^{n_0}_{H^{K}}
\end{align*}
for some constant $C_5>0$ depending only on $d$.
Therefore, (\ref{nnFvbzzz}) now leads to
\begin{align}\label{nnnnFvbzzz}
\|D^2u^{\lambda}\|_{C([0,T_{\lambda}];H^{k_1-2})}\le e^{C_2T_{\lambda}C^2_0\|u_0\|^2_{H^{K}}}  C_6[\|u_0\|_{H^{K}}+\|u_0\|^{n_*}_{H^{K}}+(T_{\lambda}C_{0}\|u_0\|_{H^{K}})^{n_*}]
\end{align}
where we denote $n_*=n_0+n_1+n_2+n_3+k_1+3$, and $C_6>0$ depends only on  $C_1,...,C_5$ and thus depends only on $k_0,d$.

Thus set  $C_0>1$ to satisfy
\begin{align*}
C_0\ge 100 C_6+100\|u_0\|^{n_*-1}_{H^{K}}+100,
\end{align*}
and take $T>0$ to be sufficiently small such that
\begin{align}\label{HddjnM3}
e^{C_2T(C^2_0\|u_0\|^2_{H^{K}})}&\le 2; \mbox{  }T C_{0}\le \frac{1}{2}.
\end{align}
Then by (\ref{nnnnFvbzzz}), (\ref{nnFvbzz}), we get $T_{\lambda}\ge T$ for all $\lambda\in (0,1)$. So there holds
\begin{align}\label{A.T1}
\|u^{\lambda}\|_{C([0,T];H^{k_1})}\lesssim 1.
\end{align}
Therefore, (\ref{IKJ}) shows
\begin{align*}
\|{\bf A}^{\lambda}\|_{C([0,T];H^{0,\infty})}\lesssim 1.
\end{align*}
Hence,  (\ref{e4}) implies for given $l\ge 0$ there exists $C_l>0$ such that
\begin{align}\label{A.T2}
\|{\bf A}^{\lambda}\|_{C([0,T];H^{l,2})}\le   C_{l}.
\end{align}

{\bf Step 3.}  By Lemma \ref{hbnjkmkk}, (\ref{A.T1}), (\ref{A.T2}), $u^{\lambda}\in C([0,T];H^{k})$ for any given  $k\ge 0$ with uniform bounds independent of $\lambda\in(0,1)$.
Thus, there exists a  sequence $\lambda_n\to 0$ such that $u^{\lambda_{n}}$ converges to $u\in  H^{\infty}$  which gives rise to a smooth solution $F$ of SMCF of the form (\ref{tFgm}) in $t\in[0,T]$ with $0<T\ll1 $ fulfilling (\ref{HddjnM3}).
A time reflection and defining $J(F)$ to be the  opposite direction  rotation can cover  $t\in[-T,0]$ as well.

{\bf Step 4.} The  uniqueness follows by \cite{Song2}. We remark that  although \cite{Song2} stated uniqueness for  compact $\Sigma$, its proof indeed does not  use compactness of $\Sigma$. In fact,  in \cite{Song2}  the compactness of $\Sigma$ was   only used in the existence part.
\end{proof}

\begin{remark}
 Lemma \ref{FFF} indeed yields an existence theory of strong solutions. In fact,  Lemma \ref{FFF} implies there exists an integer $K$ depending only on $d$ such that given an initial data $u_0\in H^K$, there exists $T>0$ depending only on $\|u_0\|_{H^{K}}$ so that SMCF has a strong solution $u\in L^{\infty}([-T,T];H^{k_1})$ with initial data $u_0$. This can be proved by density arguments and the uniform bounds of $\|u^{\lambda}\|_{L^{\infty}_tH^{k_1}_x}$ obtained in Lemma \ref{FFF}. In the case $d=2$, given an integer $l\ge 4$,  we can get a strong solution $u\in C([-T,T];H^{l}(\Bbb R^2))$ of SMCF with initial data $u_0\in H^{l}(\Bbb R^2)$. This improvement is due to the stronger result (\ref{e3p}) in  $d=2$.
\end{remark}

To improve Lemma \ref{FFF}, we need the following blow-up criterion.
\begin{corollary}\label{GBnM}
Let  $F$ be a local smooth solution  to SMCF   of the form
\begin{align*}
F(t,x)=(x_1,...,x_d,u_1(t,x),u_2(t,x)),\mbox{  }(u_1,u_2)\upharpoonright_{t=0}=(u^0_1,u^0_2),
\end{align*}
such that $u_1,u_2\in C([0,T_*);H^{\infty})$. Then as long as
\begin{align*}
\|{\bf A}\|_{L^{\infty}_t([0,T_*); H^{0,\infty})}<\infty,
\end{align*}
 $F$ can be continuously extended to a smooth solution in $[0, T_*+\rho]$ for some $\rho>0$.
\end{corollary}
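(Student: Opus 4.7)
The idea is to show $\sup_{t\in[0,T_*)}\|u(t)\|_{H^{k_0}}<\infty$ and then invoke Lemma~\ref{FFF} from a time $t_0<T_*$ close enough to $T_*$ to continue the flow past $T_*$. The key first step is a pointwise bound on $Du$. Set $M:=\|{\bf A}\|_{L^\infty_t([0,T_*);H^{0,\infty})}$. Putting $i=j$ in the metric evolution (\ref{Mazw}), and using $|J{\bf H}|_g=|{\bf H}|_g\lesssim|{\bf A}|_g$ together with $|{\bf A}(\partial_{x_i}F,\partial_{x_i}F)|_g\le|{\bf A}|_g\,g_{ii}$, one finds $|\partial_t g_{ii}|\le CM^2 g_{ii}$. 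Gronwall's inequality then yields $g_{ii}(t)\le g_{ii}(0)\,e^{CM^2 T_*}$, and because $g_{ii}=1+|\partial_i u|^2$, there is a finite $\alpha>0$ with $\|Du\|_{L^\infty([0,T_*)\times\Bbb R^d)}\le\alpha$.

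With this pointwise bound in hand, (\ref{e4}) coupled with Gronwall upgrades the hypothesis to $\|{\bf A}\|_{L^\infty_t([0,T_*);H^{l,2})}\le C_l$ for every $l\ge 0$. Feeding $\alpha$ and these $C_l$ into Lemma~\ref{hbnjkmkk} produces $\|D^2 u\|_{L^\infty_t([0,T_*);W^{l,2})}\le C_l'$ for every $l$. The remaining low-order pieces of $\|u\|_{H^{k_0}}$ are recovered from the master equation (\ref{H3M}): the coefficients $a^{ij}=g^{ij}/(\Lambda\sqrt{1+|\partial_x u_1|^2})$ are bounded in $L^\infty$ by Step~1, so $\|\partial_t\phi(t)\|_{L^2}\lesssim\|D^2u(t)\|_{L^2}\le C_0'$, and integration in time bounds $\|u(t)\|_{L^2}$; the same argument applied to $\partial_t D u$ bounds $\|Du(t)\|_{L^2}$. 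Altogether $M_0:=\sup_{t<T_*}\|u(t)\|_{H^{k_0}}<\infty$.

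Let $T(M_0)>0$ be the existence time provided by Lemma~\ref{FFF} for any $H^\infty$ initial datum of $H^{k_0}$-norm at most $M_0$. Choose $t_0\in(0,T_*)$ with $T_*-t_0<T(M_0)/2$ and apply Lemma~\ref{FFF} to the initial datum $u(t_0)\in H^\infty$ to obtain a smooth solution $\tilde F\in C([t_0,t_0+T(M_0)];H^\sigma)$ for every $\sigma\ge 0$. Uniqueness in Lemma~\ref{FFF} forces $\tilde F=F$ on $[t_0,T_*)$, giving the desired smooth extension of $F$ to $[0,T_*+\rho]$ with $\rho:=t_0+T(M_0)-T_*>0$. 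The main obstacle is the very first step: by (\ref{e1}), $|{\bf A}|_g$ only dominates $|D^2u|$ up to the factor $(1+|Du|^2)^{3/2}$, so without an a~priori $L^\infty$ bound on $Du$ one cannot translate the intrinsic $H^{l,2}$ control of ${\bf A}$ into extrinsic Sobolev control of $u$, and Lemma~\ref{hbnjkmkk} cannot even be invoked. The geometric evolution (\ref{Mazw}) is what unlocks the argument, by furnishing a linear differential inequality for $g_{ii}$ itself that depends only on $M$.
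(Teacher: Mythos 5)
Your proposal is correct and follows essentially the same route as the paper's proof: you use the metric evolution (\ref{Mazw}) to get a Gronwall bound on $Du$ in $L^\infty$, then (\ref{e4}) plus Gronwall for the $H^{l,2}$ bounds on ${\bf A}$, Lemma \ref{hbnjkmkk} to convert these into bounds on $D^2u$, the equation itself to recover the low-order $L^2$ pieces, and finally Lemma \ref{FFF} applied near $T_*$ to extend. The only (immaterial) difference is that the paper bounds $\|Du\|_{L^2}$ by integrating the pointwise differential inequality for $|Du|^2$ over $\Bbb R^d$ rather than via the equation for $\partial_t Du$.
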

\begin{proof}
Assume that for some $C_1>0$,
\begin{align*}
\|{\bf A}\|_{L^{\infty}_t([0,T_*); H^{0,\infty})}\le C_1.
\end{align*}
By (\ref{Mazw}), we get
\begin{align}\label{cfgvbjkimnm}
\partial_t|Du|^2\le 2C^2_1(|Du|^2+1).
\end{align}
So Gronwall inequality shows
\begin{align}\label{dxxfghj}
\|Du\|_{L^{\infty}_t([0,T_*); L^{\infty}_x)}\lesssim_{C_1,T_*} \|Du_0\|_{L^{\infty}_x}+1.
\end{align}
Meanwhile, given $l\in \Bbb N$, (\ref{e4}) with Gronwall inequality implies there exists $B_l>0$ such that
\begin{align}\label{1dxxfghj}
\|{\bf A}\|_{L^{\infty}_t([0,T_*); H^{l,2})}\lesssim_{T_*,C_1}B_{l}.
\end{align}
By  Lemma \ref{hbnjkmkk}, (\ref{dxxfghj}) and (\ref{1dxxfghj}) imply for any given $l\in\Bbb N$
\begin{align*}
\|D^2u\|_{L^{\infty}_t([0,T_*); H^{l}_x)}\lesssim_{l,T_*,C_1} 1.
\end{align*}
Integrating (\ref{cfgvbjkimnm}) in $\Bbb R^d$, we deduce from Gronwall inequality that
\begin{align*}
\|Du\|_{L^{\infty}_t([0,T_*);L^{2}_x)}\lesssim_{C_1,T_*}  \|Du_0\|_{L^{2}_x}+1.
\end{align*}
And one has
\begin{align*}
\frac{d}{dt}\|u\|^2_{ L^{2}_x}\le \| {\bf H}\|_{H^{0,2}}\|u\|_{L^2_x}.
\end{align*}
Thus  Gronwall inequality  yields
\begin{align*}
\|u\|_{L^{\infty}_t([0,T_*);L^{2}_x)}\lesssim_{C_1,T_*}  \|u_0\|_{L^{2}_x}+1.
\end{align*}
In a summary, we get
$$\|u\|_{L^{\infty}_t([0,T_*);H^{k}_x)}<\infty, \mbox{ }\forall k\in \Bbb Z_+.$$
By Lemma \ref{FFF}, $u$ can be continuously extended beyond the interval $0\le t<T_*$.
\end{proof}

\begin{lemma}\label{11HjK}
Given $d\ge 2$, let $k_0$ be the smallest integer such that $k_0>\frac{d}{2}+2$, i.e. $k_0=[\frac{d}{2}]+3$.  Let  $(u^0_1,u^0_2)\in H^{\infty}$. Then there exists a  constant $T>0$ depending only on
$$\|u^0_1\|_{H^{k_0}}+\|u^0_2\|_{H^{k_0}},$$
such that  there exists a unique smooth local solution $F$ to SMCF  of the form
\begin{align*}
F(t,x)=(x_1,...,x_d,u_1(t,x),u_2(t,x)),\mbox{  }(u_1,u_2)\upharpoonright_{t=0}=(u^0_1,u^0_2),
\end{align*}
so that $u_1,u_2\in C([-T,T];H^{\sigma})$ for any $\sigma>0$.
\end{lemma}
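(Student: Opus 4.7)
The plan is to upgrade Lemma~\ref{FFF} so that the lifespan depends only on $M := \|u^0_1\|_{H^{k_0}}+\|u^0_2\|_{H^{k_0}}$, and not on the higher Sobolev norm $\|u_0\|_{H^{K}}$ used inside the proof of Lemma~\ref{FFF}. I will (i) invoke Lemma~\ref{FFF} to produce a smooth solution on some maximal forward interval $[0,T_*)$, (ii) derive an a priori bound on $\|u(t)\|_{H^{k_0}}$ over an interval whose length depends only on $M$, and (iii) apply the blow-up criterion Corollary~\ref{GBnM} to extend the smooth solution across $[0,T(M)]$; time reversal then handles $[-T(M),0]$ and uniqueness is inherited from Lemma~\ref{FFF}.

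The crucial point of step (ii) is the interplay between the geometric energy estimate (\ref{e4}) and the Sobolev embedding that the threshold $k_0>\tfrac{d}{2}+2$ makes available. By (\ref{e1}) and Sobolev, $\|\mathbf{A}\|_{L^\infty}\le\|D^2u\|_{L^\infty}\le C_{\mathrm{Sob}}\|u\|_{H^{k_0}}$, so that summing (\ref{e4}) over $l=0,\dots,k_0-2$ yields the differential inequality
\[
\frac{d}{dt}\,\|\mathbf{A}(t)\|_{H^{k_0-2,2}}^{2}\;\le\;C\,\|u(t)\|_{H^{k_0}}^{2}\,\|\mathbf{A}(t)\|_{H^{k_0-2,2}}^{2}.
\]
To close this one needs a matching bound of $\|u\|_{H^{k_0}}$ in terms of $\|\mathbf{A}\|_{H^{k_0-2,2}}$. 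I would integrate the pointwise identity (\ref{e3s}) for $l=0,1,\dots,k_0-2$ in $L^2$: the leading contribution is $(1+\|Du\|_{L^\infty})^{(k_0-1)/2}\|\nabla^{k_0-2}\mathbf{A}\|_{L^2}$, while the product terms $|D^{l_1+1}u|\cdots|D^{l_j+1}u|$ (with $\sum l_i=k_0-1$, $j\ge 2$, $l_i\le k_0-2$) are handled by Gagliardo--Nirenberg, putting one factor in $L^2$ and the others in $L^\infty$ via Sobolev---this is allowed precisely because $k_0>\tfrac{d}{2}+2$ ensures $\|D^a u\|_{L^\infty}\lesssim \|u\|_{H^{k_0}}$ for all $a\le k_0-\lceil d/2+1\rceil\ge 2$. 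The result is an estimate of the schematic form
\[
\|u(t)\|_{H^{k_0}}\;\le\;C(\|u(t)\|_{H^{k_0}})\bigl(\|\mathbf{A}(t)\|_{H^{k_0-2,2}}+1\bigr)+P\bigl(\|u(t)\|_{H^{k_0}}\bigr),
\]
with $C(\cdot)$ and $P$ polynomials (and $P$ of degree $\ge 2$, vanishing at $0$).

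Running a continuity/bootstrap argument with ansatz $\|u(t)\|_{H^{k_0}}\le\Phi(M)$ for a sufficiently large polynomial $\Phi$ chosen to absorb the nonlinear feedback $P(\Phi(M))$, and then feeding this into the Gronwall-type estimate for $\|\mathbf{A}\|_{H^{k_0-2,2}}^{2}$ obtained above (whose initial size is polynomially bounded by $M$ via (\ref{e2})), produces a lifespan $T=T(M)>0$ on which the ansatz can be improved strictly and hence the bootstrap closes. On $[0,T(M)]$, Sobolev gives $\|\mathbf{A}\|_{L^\infty_t([0,T(M)];H^{0,\infty})}<\infty$, so Corollary~\ref{GBnM} extends the smooth solution to $[0,T(M)]$. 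The argument of Step~3 in Lemma~\ref{FFF} (time reflection, reversing the orientation of $J$) handles $[-T(M),0]$, and uniqueness follows from Lemma~\ref{FFF}.

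The main obstacle is the self-referential nature of the conversion step: the global estimate (\ref{e3}) contains a $\sum_{k\ge 2}\|D^2u\|_{W^{l,2}}^{k}$ tail that is not self-improving for large data, so one cannot use (\ref{e3}) directly. Working instead with the pointwise (\ref{e3s}) and a carefully arranged Gagliardo--Nirenberg interpolation is what isolates a genuine polynomial dependence on $\|u\|_{H^{k_0}}$ and allows the bootstrap to close at the level $H^{k_0}$ rather than $H^{K}$. It is also at this step that the threshold $k_0>\tfrac{d}{2}+2$ is essential: it guarantees both the Sobolev embedding $\|\mathbf{A}\|_{L^\infty}\lesssim\|u\|_{H^{k_0}}$ that makes (\ref{e4}) a closed Gronwall inequality, and the $L^\infty$ placement of all but one factor in each product appearing in (\ref{e3s}).
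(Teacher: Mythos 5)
Your overall architecture (produce a smooth solution from Lemma \ref{FFF}, prove an a priori bound on an interval whose length depends only on $M=\|u^0_1\|_{H^{k_0}}+\|u^0_2\|_{H^{k_0}}$, then extend via Corollary \ref{GBnM} and time reflection) matches the paper's. The gap sits exactly in the step you flag as ``the main obstacle'': the conversion from $\|{\bf A}\|_{H^{k_0-2,2}}$ back to $\|u\|_{H^{k_0}}$. Integrating (\ref{e3s}) and applying Gagliardo--Nirenberg to the products $|D^{l_1+1}u|\cdots|D^{l_j+1}u|$ (with $j\ge 2$, $\sum l_i=k_0-1$, $l_i\le k_0-2$) unavoidably produces terms at least quadratic in norms at or essentially at the top level: for the worst term $|D^{k_0-1}u|\,|D^2u|$ you must either put $D^{k_0-1}u$ in $L^\infty$ (impossible, since $k_0-1+\frac d2>k_0$) or put $D^2u$ in $L^\infty$, which already costs essentially the full $H^{k_0}$ norm because $k_0$ is the \emph{minimal} integer above $\frac d2+2$; your claim that all but one factor can be placed in $L^\infty$ at a cost below $H^{k_0}$ is therefore false. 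So your schematic inequality $X\le C(X)(\|{\bf A}\|_{H^{k_0-2,2}}+1)+P(X)$ with $X=\|u\|_{H^{k_0}}$ and $\deg P\ge 2$ is just (\ref{e3}) again, and it cannot be closed by enlarging the ansatz: the feedback $P(\Phi(M))\gtrsim \Phi(M)^2$ grows faster than $\Phi(M)$, so the continuity argument never returns a strictly improved bound once $M$ is large. The paper states explicitly that this is why (\ref{e3p}) fails for $d\ge 3$ and why (\ref{e3}) only suffices in the small data case.

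The paper's proof avoids the circularity by never converting back to $u$ inside the bootstrap. It bootstraps on $\|{\bf A}\|_{L^\infty_t H^{0,\infty}}\le C_*\|u_0\|_{H^{k_0}}$ and closes the loop entirely at the level of the tensor ${\bf A}$ on the moving surface: (\ref{e2}), (\ref{e4}) and Gronwall control $\|{\bf A}\|_{H^{[\frac d2]+1,2}}$; Hamilton's interpolation inequality (Lemma \ref{Hami}, whose constant is metric-independent) controls $\|\nabla{\bf A}\|_{H^{0,2[\frac d2]+2}}$; the $L^p$ evolution inequality (\ref{Maz1}) controls $\|{\bf A}\|_{H^{0,2[\frac d2]+2}}$; the conservation $\partial_t\mu=0$ gives uniform volume bounds of unit balls; and Mantegazza's Sobolev inequality for tensors (Lemma \ref{Maz}, with $p=2[\frac d2]+2>d$) then upgrades these $L^p$ bounds to $\max_\Sigma|{\bf A}|_g$, closing the bootstrap. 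None of these ingredients appears in your argument, and the flat embedding $\|{\bf A}\|_{L^\infty}\lesssim\|u\|_{H^{k_0}}$ cannot substitute for them because it reintroduces $\|u\|_{H^{k_0}}$ as the unknown. To repair the proof you would need to change the bootstrap quantity to $\|{\bf A}\|_{H^{0,\infty}}$ and supply the Sobolev embedding on the evolving manifold.
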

\begin{proof}
It suffices to prove the lifespan in Lemma \ref{FFF} in fact can be improved to depend only on $\|u_0\|_{H^{k_0}}$.
Given $u_0\in H^{\infty}$, by Lemma \ref{FFF} there is a smooth graph solution to SMCF in $t\in(-T_1,T_1)$ for some $T_1>0$. Let $T_*>0$ be the maximal time such that
\begin{align}\label{GGGbb}
\|{\bf A}\|_{L^{\infty}_{t}((-T_*,T_*);H^{0,\infty})}\le C_*\|u_0\|_{H^{k_0}},
\end{align}
where $C_*>1$ is a constant to be determined later. It is easy to see $T_*>0$ by the continuity of $u(t)$ in $t$ and Sobolev embeddings. Moreover, we see $T_*\le T_1$ by the blow-up criterion in Corollary \ref{GBnM}. In the rest   we prove $T_*\ge T$ for some $T>0$ depending only on $\|u_0\|_{H^{k_0}}$.

By (\ref{GGGbb}), (\ref{e2}), (\ref{e4}) and Gronwall inequality, we get
\begin{align}\label{V31}
\sup_{t\in (-T_*,T_*)}\|{\bf A}\|_{H^{[\frac{d}{2}]+1,2}}\le C_1e^{CT_*C^2_*\|u_0\|^2_{H^{k_0}}}(\|u_0\|_{H^{k_0}}+\|u_0\|^{m}_{H^{k_0}}),
\end{align}
where $m\in\Bbb Z_+$, $C>0$, $C_1>0$ are universal constants depending only on $d$.
At $t=0$, by Sobolev embedding and (\ref{e1}), one has
\begin{align}\label{V01}
 \|{\bf A}_0\|_{H^{0,2[\frac{d}{2}]+2}}\lesssim \|u_0\|_{H^{k_0}}.
\end{align}
And applying Hamilton's interpolation inequality (see Lemma \ref{Hami}) to ${\bf A}$  shows
\begin{align}\label{V5}
\|\nabla {\bf A}\|_{H^{0,2[\frac{d}{2}]+2}}\le C_2\|{\bf A}\|^{1-\frac{1}{[\frac{d}{2}]+1}}_{H^{0,\infty}}\|{\bf A}\|^{\frac{1}{[\frac{d}{2}]+1}}_{H^{[\frac{d}{2}]+1,2}},
\end{align}
where $C_2>0$ depends only on $d$.
Then (\ref{Maz1}) and Gronwall inequality show
\begin{align*}
&\sup_{t\in (-T_*,T_*)}\|{\bf A}\|^{2[\frac{d}{2}]+2}_{H^{0,2[\frac{d}{2}]+2}}\\
&\lesssim e^{CT_*+CT_*C^2_*\|u_0\|^2_{H^{k_0}}}\left(\|u_0\|^{2[\frac{d}{2}]+2}_{H^{k_0}}+T_*(C_*\|u_0\|_{H^{k_0}})^{2[\frac{d}{2}]}\|{\bf A}\|^2_{L^{\infty}((-T_*,T_*);H^{[\frac{d}{2}]+1,2})}\right),
\end{align*}
which, together with (\ref{V31}), further gives
\begin{align}\label{Gg56}
 \sup_{t\in (-T_*,T_*)}\|{\bf A}\|_{H^{0,2[\frac{d}{2}]+2}}\le   \Phi\left(T_*(C_*\|u_0\|_{H^{k_0}})^2+T_*(C_*\|u_0\|_{H^{k_0}})^{2[\frac{d}{2}]},\|u_0\|_{H^k_0}\right),
\end{align}
for some universal function $\Phi:\Bbb R^+\times \Bbb R^+\to \Bbb R^+$ which is increasing in both variables.
For simplicity of notations, we define
$$\Omega:=T_*(C_*\|u_0\|_{H^{k_0}})^2+T_*(C_*\|u_0\|_{H^{k_0}})^{2[\frac{d}{2}]}.$$
Then, (\ref{Gg56}) implies
\begin{align}\label{ws}
\|{\bf H}\|_{L^{\infty}_t((-T_*,T_*);H^{0,2[\frac{d}{2}]+2})}\le  \Phi(\Omega,\|u_0\|_{H^{k_0}}).
\end{align}
Let $B_1(y)$ be a unit ball in $\Bbb R^{n+2}$. Since $\partial_t\mu(t)=0$ along the SMCF, we see for all $t\in (-T_*,T_*)$,
\begin{align}\label{V2}
\mu_t(B_1(y)\bigcap \Sigma_t)\le \mu_0(\{x\in\Bbb R^d: |x-y'|\le 1\})\le C(\|Du_0\|_{L^{\infty}_x})\le C(\|u_0\|_{H^{k_0}_x}),
\end{align}
where $y'\in\Bbb R^d$ is the projection of $y\in\Bbb R^{d+2}$ onto $\Bbb R^d$.
By (\ref{ws}) and (\ref{V2}),  we have  verified the conditions in Lemma \ref{Maz}. So, given $p>d$, Lemma \ref{Maz} implies
\begin{align}\label{V4}
\|{\bf A}\|_{L^{\infty}([-T_*,T_*]; H^{0,\infty})}\le C'_1(\|{\bf A}\|_{L^{\infty}([-T_*,T_*]; H^{0,p})}+\|\nabla {\bf A}\|_{L^{\infty}([-T_*,T_*]; H^{0,p})}),
\end{align}
for some $C'_1>0$ depending only on $p,d$ and
$\Omega$.
Taking $p=2[\frac{d}{2}]+2$, we thus obtain by (\ref{V4}) and (\ref{V5}) that
\begin{align}
 \|{\bf A}\|_{L^{\infty}([-T_*,T_*]; H^{0,\infty})}&\lesssim \|{\bf A}\|_{L^{\infty}([-T_*,T_*]; H^{0,2[\frac{d}{2}]+2})}+\|{\bf A}\|_{L^{\infty}([-T_*,T_*]; H^{[\frac{d}{2}]+1,2})},\label{V6}
\end{align}
where the implicit constant  depends only on $\Omega$ and $d$.
Using H\"older inequality
\begin{align*}
\|{\bf A}\|_{H^{0,2[\frac{d}{2}]+2}}&\le  \|{\bf A}\|^{\frac{1}{[\frac{d}{2}]+1}}_{H^{0,2}}  \|{\bf A}\|^{1-\frac{1}{[\frac{d}{2}]+1}}_{H^{0,\infty}},
\end{align*}
(\ref{V6}) further yields
\begin{align}\label{HgjL}
\|{\bf A}\|_{L^{\infty}([-T_*,T_*];H^{0,\infty})}&\lesssim \|{\bf A}\|_{L^{\infty}([-T_*,T_*];H^{[\frac{d}{2}]+1,2})},
\end{align}
where the implicit constant  depends only on $\Omega,d$.
Hence, by (\ref{V31}) and (\ref{HgjL}) we obtain
\begin{align}\label{V67}
\|{\bf A}\|_{L^{\infty}([-T_*,T_*];H^{0,\infty})}\le \Phi_1(\Omega,\|u_0\|_{H^{k_0}})
\end{align}
for some universal  function $\Phi_1:\Bbb R^+\times\Bbb R^+\to \Bbb R^+$ which is increasing in both variables.

Now, take $C_*$ to be sufficiently large such that
\begin{align}\label{1afghjk}
C_*\|u_0\|_{H^{k_0}}\ge 2\Phi_1( 1,\|u_0\|_{H^{k_0}})+2,
\end{align}
and choose $T>0$ to be sufficiently small such that
\begin{align}\label{2afghjk}
 \Omega=T_*(C_*\|u_0\|_{H^{k_0}})^2+T_*(C_*\|u_0\|_{H^{k_0}})^{2[\frac{d}{2}]}\le \frac{1}{2}.
\end{align}
Then (\ref{V67})  reveals
\begin{align*}
\|{\bf A}\|_{L^{\infty}_t([-T,T];H^{0,\infty})}\le \frac{1}{2}C_*\|u_0\|_{H^{k_0}}.
\end{align*}
So one has $T_*\ge T$ by (\ref{GGGbb}). It is obvious that $T>0$ defined via (\ref{1afghjk})-(\ref{2afghjk}) depends only on $\|u_0\|_{H^{k_0}}$.
\end{proof}

 \section{Proof of Theorem 1.1 in $d\ge 2$}

By Duhamel principle,
the solution of (\ref{mss}) can be expressed by

\begin{align}\label{1ma}
 \phi(t)=e^{\mathrm{i}t\Delta}\phi_0+\int^{t}_{0}e^{\mathrm{i}(t-s)\Delta} O({\partial^2_x\phi }|\partial_x\phi|^2)(s)ds.
\end{align}

Let $k$ be the smallest integer such that $k>\max(\frac{d}{2}+3.5,d+1)$. Let $1<q<2$, $q'=\frac{q}{q-1}$, and
\begin{align}\label{huashan}
\frac{1}{q}=\frac{1}{d}+\frac{1}{2}-\delta,
\end{align}
where $\delta>0$ is sufficiently small.

Let $\mathcal{T}$ be the maximal time such that
\begin{align}\label{1mab1}
\sup_{t\in[-\mathcal{T},\mathcal{T}]}\left(\langle t\rangle^{\frac{d}{2}(\frac{2}{q}-1)}\| \phi(t)\|_{W^{2,q'}}+\| \phi(t)\|_{H^{k}}\right)\le C_0\epsilon,
\end{align}
where $C_0>1$ is to be determined later.
Let's prove Theorem 1.1 by bootstrap.

{\bf Step 1.} By local theorem in Lemma \ref{HjK} and Sobolev embedding, one has  $\mathcal{T}\ge 1$.

{\bf Step 2.}
Assume that $1<|t|< \mathcal{T}$, and without loss of generality let $t\in[1,\mathcal{T}]$.  Then  (\ref{1ma}) and linear dispersive estimates of $e^{\mathrm{i}\Delta t}$ (see Lemma 6.2) give
\begin{align}\label{1ma1}
\| \phi(t)\|_{W^{2,q'}}\lesssim t^{\frac{d}{2}(1-\frac{2}{q})}\|\phi_0\|_{W^{2,q}}+\int^{t}_{0}\sum^{2}_{l=0}(t-s)^{\frac{d}{2}(1-\frac{2}{q})}\|\partial^{l}O({\partial^2_x\phi }|\partial_x\phi|^2)(s)\|_{L^q_x}ds.
\end{align}
{\it 1.  Leading cubic terms.}
The leading cubic part of RHS of (\ref{1ma1}) is
\begin{align}
&\int^{t}_{0}(t-s)^{\frac{d}{2}(1-\frac{2}{q})} \||{\partial^3_x\phi }||\partial^2_x \phi|\partial_x\phi|\|_{L^q_x}ds +\int^{t}_{0}(t-s)^{\frac{d}{2}(1-\frac{2}{q})} \||{\partial^4_x\phi }| |\partial_x\phi|^2\|_{L^{q}_x}ds\label{Fghlknm}\\
&+{{\rm low\mbox{ } derivative\mbox{ } terms}}.
\end{align}
Gagliardo-Nirenberg inequality shows
\begin{align*}
& \|{\partial_x\phi }\|_{L^{\frac{2q}{2-q}}_x}\lesssim  \|\phi\|^{\omega}_{L^{q'}_x} \|{\partial^k_x\phi }\|^{1-\omega }_{L^{2}_x}, \mbox{ }
\omega=\frac{k-1-\frac{d}{q'}}{k+d(\frac{1}{2}-\frac{1}{q})},\\
 &\|{\partial^2_x\phi }\|_{L^{\frac{2q}{2-q}}_x}\lesssim  \|{\partial_x\phi }\|^{\theta}_{L^{q'}_x} \|{\partial^k_x\phi }\|^{1-\theta }_{L^{2}_x},
\mbox{ }    \theta=\frac{k-2-\frac{d}{q'}}{k-1+d(\frac{1}{2}-\frac{1}{q})}.
\end{align*}
When $k\ge d+2$, one has for $q$ in (\ref{huashan}) with $0<\delta\ll 1$,
\begin{align}
 \frac{d}{2}(1-\frac{2}{q})\min(2\theta,2\omega)&<-1\label{B91}\\
\min( \theta, \omega)&> \frac{1}{2}.\label{B92}
\end{align}
Then (\ref{1mab1})  shows  (\ref{Fghlknm}) is dominated by
\begin{align*}
&\int^{t}_{1} (t-s)^{\frac{d}{2}(1-\frac{2}{q})}\|\partial^4_x\phi\|_{L^2_x}\|{\partial^2_x\phi }\|_{L^{\frac{2q}{2-q}}_x}\|\partial_x\phi|\|_{L^{\frac{2q}{2-q}}_x}ds
+ \int^{1}_{0} (t-s)^{\frac{d}{2}(1-\frac{2}{q})}\|{\partial^4_x\phi }\|_{L^2_x}\|\partial_x\phi\|^2_{L^{\frac{2q}{2-q}}_x}ds\\
&\lesssim \epsilon^3\int^{t}_{1}(t-s)^{\frac{d}{2}(1-\frac{2}{q})}s^{\frac{d}{2}(1-\frac{2}{q})2\min(\theta,\omega)}ds+ \int^{1}_0  (t-s)^{\frac{d}{2}(1-\frac{2}{q})}\|\phi \|^3_{H^{k}} ds\\
&\lesssim \langle t\rangle^{\frac{d}{2}(1-\frac{2}{q})}\epsilon^3,
\end{align*}
provided that $t\in[-\mathcal{T},\mathcal{T}]$, where we applied Sobolev embedding for $s\in [0,1]$ due to the assumption $k>\frac{d}{2}+3.5$.
The low derivatives terms are easier to handle and also contribute to (\ref{1ma1}) by  $\langle t\rangle^{\frac{d}{2}(1-\frac{2}{q})}\epsilon^3$.

{\it 2. Remained higher order terms.} The higher order remainders are easy to dominate, since we always have by (\ref{mq}) that
\begin{align*}
\int^{t}_{0} (t-s)^{\frac{d}{2}(1-\frac{2}{q})}\|\mathcal{R}\|_{W^{2,q}_x}ds&\lesssim
\int^{t}_{0}(t-s)^{\frac{d}{2}(1-\frac{2}{q})} \|\phi\|_{W^{4,2}_x}\|\phi\|^2_{W^{2,\frac{2q}{2-q}}_x}\|  \phi\|_{W^{1,\infty}_x}ds
\end{align*}
which is  also admissible by (\ref{B91}) and (\ref{B92}).

 In a summary we have proved in Step 2 that
 \begin{align}\label{musu1}
\| \phi(t)\|_{W^{2,q'}}\le C_1\langle  t\rangle^{\frac{d}{2}(1-\frac{2}{q})}[ \|\phi_0\|_{W^{2,q}}+C^3_0\epsilon^3]
\end{align}
 for some $C_1>0$ depending only on $d$.

{\bf Step 3.}  Let us deal with the $H^{k}$ norm in (\ref{1mab1}).
By (\ref{musu1}) and Gagliardo-Nirenberg inequality, we see
 \begin{align}\label{VbbB}
\|D^2 \phi(t)\|_{L^{\infty}_x}\lesssim \|\phi\|^{\theta_1}_{W^{2,q'}_x}\|\phi\|^{1-\theta_1}_{H^k_x} \lesssim \langle  t\rangle^{\theta_1\frac{d}{2}(1-\frac{2}{q})} \epsilon,
\end{align}
where $\theta_1\in (0,1)$ is given by
 \begin{align*}
 \theta_1=\frac{k-2-\frac{d}{2}}{d(\frac{1}{2}-\frac{1}{q})+k-2}.
\end{align*}
It is easy to verify
 \begin{align*}
 \theta_1 d(\frac{2}{q}-1)>1,
\end{align*}
for $k\ge d+2$ and $0<\delta\ll 1$.
Hence, for any $t\in[0,\mathcal{T}]$  we get
\begin{align*}
\int^{t}_0\|D^2 \phi(t)\|^2_{L^{\infty}_x}ds\lesssim  \epsilon^2.
\end{align*}
Then
by (\ref{e1}), for $t\in[0,\mathcal{T}]$ we have
\begin{align*}
\int^{t}_0\max_{\Sigma}|{\bf A}|^2_g(s)ds\lesssim \epsilon^2.
\end{align*}
Thus (\ref{e4}) and Gronwall inequality show for any $t\in[0,\mathcal{T}]$
\begin{align*}
\int_{\Sigma} |\nabla^{l}{\bf A}|^2_g(t)d\mu\le 2\int_{\Sigma} |\nabla^{l}{\bf A}|^2_g(0)d\mu,
\end{align*}
if $0<\epsilon\ll1$. Then (\ref{e2}) shows for any $t\in[0,\mathcal{T}]$, $0\le l\le k-2$,
\begin{align}\label{MkL}
\int_{\Sigma} |\nabla^{l}{\bf A}|^2_g(t)d\mu\lesssim \|\phi_0\|_{H^{k}}\ll 1,
\end{align}
provided that  $0<\epsilon\ll 1$.
Since the RHS of (\ref{e3}) with $l=k-2$  is quadratic in $\|D^2u\|_{W^{k-2,2}}$ and  $\|D^2u\|_{W^{k-2,2}}$ is small by bootstrap assumption,  (\ref{MkL}) further implies for any $t\in [0,\mathcal{T}]$,
\begin{align}\label{HnbgM}
\|D^2u(t)\|_{W^{k-2,2}}\le C\|\phi_0\|_{H^{k}}
\end{align}
for some $C>0$ if $0<\epsilon\ll1$.  (\ref{HnbgM}) provides admissible  bounds of $\|\phi\|_{{\dot H}^{l}}$ with $2\le l\le k$. Using (\ref{1ma}) we get
\begin{align}\label{tmamm}
\| \phi(t)\|_{H^2_x}\lesssim  \|\phi_0\|_{H^2_x}+ \int^{t}_{0}\sum^{2}_{l=0}\|\partial^{l}O({\partial^2_x\phi }|\partial_x\phi|^2)(s)\|_{L^2_x}ds.
\end{align}
We observe  that the RHS of (\ref{tmamm}) is dominated by $\|\phi_0\|_{H^2_x}+ C^3_0\epsilon^3$ up to a universal constant depending only on $d$ via noting  that (\ref{VbbB}) yields
\begin{align*}
\|\partial^2_x\phi\|_{L^{\infty}_x}+\|\partial_x\phi\|_{L^{\infty}_x} \lesssim \epsilon \langle t\rangle^{-\alpha},
\end{align*}
for some $\alpha>\frac{1}{2}$.

As a summary, we have obtained  admissible  bounds of $\|\phi\|_{H^{k}}$:
\begin{align}\label{HnbM}
\|u(t)\|_{H^{k}}\le C_2[ \|\phi_0\|_{H^{k}}+C^3_0\epsilon^3]
\end{align}
for some $C_2>0$ depending only on $d$ and any $t\in [0,\mathcal{T}]$. The inverse direction $t\in[-\mathcal{T},0]$ follows by a time reflection and defining $J(F)$ as the opposite direction rotation. Hence, (\ref{HnbM}) holds for all $t\in[-\mathcal{T},\mathcal{T}]$.

{\bf Step 4.}
Set $C_0$ to satisfy
\begin{align*}
C_0>4+4C_1+4C_2,
\end{align*}
then choose $\epsilon>0$ to be sufficiently small such that
\begin{align*}
C_2C^2_0\epsilon^2\le \frac{1}{4}.
\end{align*}
By Step 1, (\ref{musu1}) in Step 2, (\ref{HnbM}) in Step 3 and bootstrap, we have proved (\ref{1mab1}) holds with $C_0\epsilon$ replaced by $\frac{1}{2}C_0\epsilon$. Thus
$\mathcal{T}=\infty$. And hence (\ref{result}) holds for all $t\in\Bbb R$.

 \section{Proof of scattering}

(\ref{result3}) is  in fact  scattering type result. This follows if one has shown
\begin{align}\label{ftma1}
\int^{\infty}_0\| O({\partial^2_x\phi }|\partial_x\phi|^2) \|_{H^{2}_x}ds\lesssim  1.
\end{align}
But (\ref{ftma1})  has been proved in Step 3 of Section 4.

\section{Appendix}

The following is  Hamilton's interpolation  inequality proved in [\cite{Hami},Section 12].
\begin{lemma}\label{Hami}
Let   $T$ be any Tensor defined on manifold $\Sigma$. For   $1\le j\le i-1$, there exists a constant $C$ depending
only on dimension of $\Sigma$ and $i$, which is independent of the metric $g$ and connection such that
\begin{align}
\int_{\Sigma} |\nabla^{j} T|^{\frac{2i}{j}}d\mu\le C\max_{\Sigma}|T|^{2(\frac{i}{j}-1)}\int_{\Sigma} |\nabla^{i} T|^{2}d\mu.
\end{align}

\end{lemma}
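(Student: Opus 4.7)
The plan is to reduce the stated inequality to a discrete log-convexity estimate for the scale-invariant functionals
\[
\phi_j := \Bigl(\int_{\Sigma}|\nabla^j T|^{2i/j}\,d\mu\Bigr)^{j/(2i)}, \qquad 1\le j\le i,
\]
together with the natural endpoint $\phi_0 := \max_{\Sigma}|T|$. The target bound is equivalent (up to raising to the $2i/j$-th power) to the Kolmogorov-type interpolation $\phi_j \le C\,\phi_0^{1-j/i}\phi_i^{j/i}$, which in turn follows by telescoping $\log\phi_j$ from the single three-point estimate
\[
\phi_j^2 \le C\,\phi_{j-1}\phi_{j+1},\qquad 1\le j\le i-1.
\]

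The core calculation will be this three-point inequality. I would start from
\[
\phi_j^{2i/j} = \int_\Sigma |\nabla^j T|^{2i/j-2}\,\bigl\langle \nabla(\nabla^{j-1}T),\nabla^j T\bigr\rangle\,d\mu
\]
and integrate by parts to move the outer $\nabla$ onto the remaining factor (legitimate on the compact $\Sigma$ of Hamilton's original setting; in the ambient context of the paper one truncates and passes to the limit, using that the RHS is assumed finite). Distributing the covariant derivative and invoking the Kato-type bound $\bigl|\nabla|\nabla^j T|\bigr|\le|\nabla^{j+1}T|$ dominates both resulting terms pointwise by $C\,|\nabla^{j-1}T|\,|\nabla^{j+1}T|\,|\nabla^j T|^{2i/j-2}$. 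A three-factor H\"older inequality with exponents $p = 2i/(j-1)$, $q = 2i/(j+1)$, $r = i/(i-j)$, whose reciprocals sum to $1$ by the identity $(j-1)+(j+1)+2(i-j)=2i$, then yields
\[
\phi_j^{2i/j}\le C\,\phi_{j-1}\,\phi_{j+1}\,\phi_j^{2(i-j)/j},
\]
and the elementary identity $2i/j - 2(i-j)/j = 2$ lets one cancel the trailing $\phi_j$ power, delivering the advertised log-convexity.

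The step I expect to be the most delicate is the boundary case $j=1$, where formally $p=\infty$: here the three-factor H\"older breaks down, and one must instead pull the factor $|\nabla^{0}T|=|T|$ out of the integral as $\|T\|_\infty$ before closing with a two-factor H\"older on the remaining pair, which is precisely what forces the endpoint $\phi_0=\max_\Sigma|T|$ to appear. Once log-convexity is established for $1\le j\le i-1$, the conclusion is mechanical: setting $a_j := \log\phi_j$ gives $2a_j\le a_{j-1}+a_{j+1}+\log C$, so the discrete second differences are bounded below and a standard telescoping yields $a_j\le (1-j/i)\,a_0+(j/i)\,a_i+C_i$. Exponentiating and raising both sides to the $2i/j$-th power recovers the stated inequality with a constant depending only on $i$ and $\dim\Sigma$, and independent of the metric and connection, since the integration by parts and H\"older steps only refer to them through the intrinsic norms $|\nabla^k T|_g$ and the volume form $d\mu$.
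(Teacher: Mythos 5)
Your proposal is correct, and it is essentially the argument of the cited source: the paper does not prove this lemma itself but quotes it from Hamilton (\cite{Hami}, Section 12), whose proof is exactly the integration-by-parts/three-factor H\"older derivation of the log-convexity $\phi_j^2\le C\,\phi_{j-1}\phi_{j+1}$ (with the $j=1$ endpoint absorbing $\max_\Sigma|T|$) followed by discrete telescoping. The only detail worth recording is the standard regularization $|\nabla^j T|\rightsquigarrow(|\nabla^j T|^2+\varepsilon)^{1/2}$ needed to justify differentiating $|\nabla^j T|^{2i/j-2}$ at its zeros before passing to the limit.
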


The following is linear dispersive estimates.

\begin{lemma}
Let   $1\le q\le 2$, and $f\in L^{q}(\Bbb R^d)$. Then there exists  a constant $C>0$  depending only on $q,d$ such that
\begin{align}
\|e^{i\Delta t}f\|_{L^{q'}_x}\le C t^{\frac{d}{2}(1-\frac{2}{q})}\|f\|_{L^q_x},
\end{align}
where $q'=\frac{q}{q-1}$.
\end{lemma}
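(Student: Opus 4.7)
The statement is the standard dispersive estimate for the free Schrödinger group on $\mathbb{R}^d$, and the natural route is to establish the two endpoint bounds $q=2$ and $q=1$ and then invoke Riesz--Thorin interpolation for the intermediate range $1<q<2$. The plan is to first write down the explicit fundamental solution, extract the two endpoint estimates from it, and then interpolate.

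For the $q=2$ endpoint, Plancherel gives $\|e^{it\Delta}f\|_{L^2_x} = \|e^{-it|\xi|^2}\widehat{f}\|_{L^2_\xi} = \|f\|_{L^2_x}$, which matches the claimed bound because $\tfrac{d}{2}(1-\tfrac{2}{q})=0$ when $q=2$, with constant $C=1$. For the $q=1$ endpoint, I would use the explicit oscillatory integral representation
\begin{equation*}
(e^{it\Delta}f)(x) = \frac{1}{(4\pi i t)^{d/2}}\int_{\mathbb{R}^d} e^{i|x-y|^2/(4t)}\, f(y)\, dy,
\end{equation*}
which can be justified for $f$ in a dense class (e.g.\ Schwartz) by computing the Fourier transform of the Gaussian $e^{-(\varepsilon+it)|\xi|^2}$ and passing to the limit $\varepsilon \downarrow 0$. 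Taking absolute values inside the integral gives
\begin{equation*}
\|e^{it\Delta}f\|_{L^\infty_x} \le \frac{1}{(4\pi t)^{d/2}}\|f\|_{L^1_x},
\end{equation*}
which is exactly the $q=1$ case since $\tfrac{d}{2}(1-\tfrac{2}{1}) = -\tfrac{d}{2}$.

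With these two endpoints in hand, Riesz--Thorin interpolation applied to the linear operator $T_t := e^{it\Delta}$ (acting between the pairs $(L^1,L^\infty)$ and $(L^2,L^2)$, with operator norms $C t^{-d/2}$ and $1$ respectively) yields that for every $q\in[1,2]$, with $\theta\in[0,1]$ determined by $\tfrac{1}{q}=\theta\cdot 1+(1-\theta)\cdot\tfrac12$, the operator $T_t$ maps $L^q\to L^{q'}$ with norm bounded by $(Ct^{-d/2})^{\theta}\cdot 1^{1-\theta}$. A short computation gives $\theta = \tfrac{2}{q}-1$ and hence the exponent $-\tfrac{d}{2}\theta = \tfrac{d}{2}(1-\tfrac{2}{q})$, producing the stated bound with a constant $C$ depending only on $q$ and $d$. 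Finally I would extend from Schwartz functions to all of $L^q$ by density in the standard way.

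There is essentially no obstacle here: the only place that needs mild care is the justification of the pointwise kernel formula (since $e^{it\Delta}$ does not act pointwise on $L^1$), but this is handled by the usual Gaussian regularization argument. Everything else is bookkeeping of exponents.
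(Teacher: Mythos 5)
Your proof is correct and is the canonical argument (Plancherel at $q=2$, the explicit Gaussian kernel bound at $q=1$, Riesz--Thorin in between); the paper states this lemma in its appendix as a standard fact without proof, so your write-up simply supplies the standard justification it relies on. The only cosmetic caveats concern the statement itself rather than your argument: the bound is meaningful for $t\neq 0$ (with $|t|$ in place of $t$), and at $q=1$ one reads $q'=\infty$.
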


\section*{Acknowledgement}
The author owes sincere gratitude  to the referees for the insightful comments which have deeply improved the presentation of this work.
The author thanks Prof. Chong Song and Youde Wang for drawing the author's attention to SMCF and pointing out an error in the first version of this manuscript.
 This work is partially supported by NSF-China Grant-1200010237 and Grant-11631007.

\end{document}